\numberwithin{equation}{section}
\def\thm@space@setup{\thm@preskip=1em
\thm@postskip=1em}
\theoremstyle{plain}
\newtheorem{theorem}{Theorem}[section]
\newtheorem{lemma}[theorem]{Lemma}
\newtheorem{corollary}[theorem]{Corollary}
\newtheorem{proposition}[theorem]{Proposition}
\newtheorem{question}[theorem]{Question}
\theoremstyle{definition}
\newtheorem{definition}[theorem]{Definition}
\theoremstyle{remark}
\newtheorem{remark}[theorem]{Remark}
\theoremstyle{plain}
\newcommand{\pv}[1]{\left\langle #1 \right\rangle} 
\newcommand{\ps}[1]{\left\{ #1 \right\}} 
\newcommand{\inv}{^{-1}}
\newcommand{\Z}{\mathbb{Z}}
\newcommand{\Aut}{\mathrm{Aut}}
\title{Classification of Finite Groups With Equal Left and Right Quotient Sets}
\author{Haran Mouli\,\orcidlink{0009-0000-5891-6618}}\email{hmouli@wisc.edu}
\address{Department of Mathematics, University of Wisconsin, Madison, WI, 53706}
\author{Pramana Saldin\,\orcidlink{0009-0006-7395-8342}}\email{saldin@wisc.edu}
\address{Department of Mathematics, University of Wisconsin, Madison, WI, 53706}
\date{\today}
\begin{document}

\begin{abstract}
    In this paper, we classify all finite groups $G$ which have the following property: for all subsets $A \subseteq G$, we have $|AA^{-1}| = |A^{-1}A|$. This question is motivated by the problem in additive combinatorics of More Sums Than Difference sets and answers several questions posed in \cite{duvivier2025}. 
\end{abstract}
\maketitle 

\tableofcontents

\section{Introduction}

{\color{red}
The authors have been made aware of existing literature which independently proved \autoref{thm:classification-of-balanced-groups} \cite{Herzog-Kaplan-Longobardi-Maj}. 
We are grateful to Liubomir Chiriac for showing us this result after reading the paper posing this problem from the second author \cite{duvivier2025}.
}

Let \(G\) be a finite group. We say that \(G\) is \textbf{balanced} 
if for all \(A \subseteq G\), the \textbf{left quotient set} and the \textbf{right quotient set}, defined as 
\begin{align*}
    AA\inv &\coloneqq \ps{a_ia_j\inv : a_i,a_j\in A}, \text{ and}\\
    A\inv A &\coloneqq \ps{a_i\inv a_j : a_i,a_j\in A},
\end{align*}
respectively, have equal cardinality. 
Otherwise, we say $G$ is \textbf{imbalanced}. 

This question of whether or not a group is balanced 
is an extension of problems regarding 
More Sums Than Differences (MSTD) sets, which asks about finite subsets \(A\) 
of groups where (using additive notation) \(|A+A| > |A-A|\), \(|A+A| < |A-A|\), and \(|A+A| = |A-A|\). 
For example, these sets have been studied in finite abelian groups, \(\Z\), \(\Z^n\), and \(D_\infty = C_2 \ltimes \Z\) \cite{zhao2010counting,ascoli2022sum,martin2006setssumsdifferences,kim2020constructionsgeneralizedmstdsets}. 

In a recent paper coauthored by the second author \cite{duvivier2025}, the 
following two questions were posed: is there an infinite family of non-abelian 
balanced groups, and what are the necessary and sufficient conditions 
for a group \(G\) to be balanced? 

We resolve both questions by classifying all balanced groups.
\begin{theorem}\label{thm:classification-of-balanced-groups}
    The balanced groups are precisely: 
    \begin{enumerate}
        \item Finite abelian groups,
        \item $Q_8 \times (C_2)^n$ for $n \ge 0$,
        \item \(D_6\), \(D_8\), \(D_{10}\), \(Q_{12}\), 
    \(Q_{16}\), \(C_4 \ltimes C_4\), and \(Q_{20}\).
    \end{enumerate}
\end{theorem}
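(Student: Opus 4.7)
The plan is to prove the classification by verifying both implications: every group on the list is balanced, and every finite group not on the list is imbalanced. The abelian case is immediate since $AA\inv$ and $A\inv A$ coincide as sets. For the infinite family $Q_8 \times (C_2)^n$, I would attempt the stronger statement that $AA\inv = A\inv A$ as sets. The relevant structural feature is that $Q_8$ is Hamiltonian (every subgroup is normal) and that inversion on $(C_2)^n$ is trivial, so the $(C_2)^n$-coordinate of $ab\inv$ automatically matches that of $a\inv b$; one then needs to check that the $Q_8$-coordinate also matches, using that any two elements of $Q_8$ commute up to a central sign. The seven sporadic groups of orders $6$, $8$, $10$, $12$, $16$, $16$, and $20$ can be verified by direct computation, possibly cut down by enumerating subsets up to the automorphism action of the group.

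The key tool for the converse is the \textbf{subgroup inheritance principle}: if $H \leq G$ and $A \subseteq H$, then $AA\inv$ and $A\inv A$ lie in $H$ and are the same whether computed in $H$ or in $G$. Consequently, balancedness passes to subgroups, so to prove $G$ is imbalanced it suffices to exhibit an imbalanced subgroup of $G$. My strategy is to identify a small catalogue of \emph{minimal imbalanced groups} --- imbalanced groups all of whose proper subgroups are balanced --- and then show that every finite group $G$ not appearing on the balanced list contains at least one of them as a subgroup.

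The main obstacle is the case analysis. I would first locate candidate minimal imbalanced groups by probing the dihedral, dicyclic, and metacyclic families that sit just beyond the listed balanced ones --- for instance $D_{12}$, $Q_{24}$, and $Q_8 \times C_3$ --- and for each produce an explicit witness subset $A$ of size $3$ or $4$ with $|AA\inv| \neq |A\inv A|$. With the catalogue in hand, the argument would branch on the structure of $G$: finite $p$-groups via the classification in small order, supplemented by a lemma that large exponent or nilpotency class forces an imbalanced subgroup; general nilpotent groups by reducing to their Sylow components; solvable groups via Hall subgroups; and non-solvable groups handled separately through their simple composition factors. The crux is ensuring completeness of this analysis, since the classification is delicate --- for example $D_6$ is balanced while $D_6 \times C_2 \cong D_{12}$ is not, and $Q_8 \times C_2^n$ is balanced while $Q_8 \times C_3$ is not --- so each reduction must be executed with care to avoid both omitting balanced groups and leaving imbalanced ones undetected.
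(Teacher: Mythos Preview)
Your forward direction matches the paper: abelian groups are trivially balanced, the Hamil-$2$-nian family is handled by showing $AA\inv=A\inv A$ as sets via the commute/anti-commute dichotomy in $Q_8$, and the seven sporadic groups are finite checks.

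The converse, however, has a genuine gap. Your strategy hinges on a \emph{small catalogue} of minimal imbalanced groups, but this catalogue is infinite: for every prime $p\ge 7$ the dihedral group $D_{2p}$ is imbalanced (the paper exhibits an explicit $6$-element witness valid for all $D_{2n}$ with $n\ge 6$), yet all of its proper subgroups are cyclic and hence balanced. So enumeration of minimal obstructions cannot terminate, and the case split nilpotent/solvable/non-solvable you propose gives no mechanism for handling, say, an arbitrary Frobenius group $C_p\rtimes C_q$ without essentially reproving the structural restrictions the paper obtains.

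Two ingredients from the paper are missing from your plan. First, the paper uses inheritance under \emph{subquotients}, not just subgroups: quotients are indispensable in several reductions (e.g.\ collapsing the Sylow $2$-subgroup in a sign semidirect product down to $C_2$ or $C_8$, and in the $2$-group case passing to $P/K$ to make a non-normal subgroup have order $2$). Second, and more fundamentally, the paper introduces the test sets $A=H\cup gH$ with $H$ non-normal and $g\notin N_G(H)$, and proves that requiring $|AA\inv|=|A\inv A|$ for just these sets already forces $G=P\ltimes Q$ with $P$ a $2$-group, $Q$ odd abelian, and the conjugation action $P\to\Aut(Q)$ landing in $\{\pm 1\}$. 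This structural theorem is what replaces your open-ended case analysis with three concrete families (direct product, sign semidirect product, non-abelian $2$-group), each of which is then dispatched by a short argument plus a bounded computer search at order $\le 32$. Without an analogue of this reduction, your outline does not yet constitute a proof.
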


In Section \ref{sec:The-Hamil-2-nian-Groups}, we prove that Hamiltonian $2$-groups $Q_8 \times (C_2)^n$ are balanced, which give an example of an infinite family of non-abelian balanced groups. In Section \ref{sec:Weakly-Balanced-Groups}, we define the notion of a weakly balanced group, where one checks that $|AA^{-1}| = |A^{-1}A|$ for certain special subsets $A \subseteq G$ suggested in \cite{tao} for which the equality of cardinalities is not generally expected to hold. The condition of being weakly balanced is easier to study and provides a first filter on which finite groups are balanced. The main result we use is \autoref{cor:three cases of weakly balanced groups}, which shows that every weakly balanced group must be one of three types. Finally, in Section \ref{sec:Classification-of-Balanced-Groups}, we analyze these three cases separately and classify all balanced groups using a combination of group-theoretic arguments and some computation in SageMath.

\begin{remark}
    For this paper, we use the following notation for groups: 
    \begin{itemize}
        \item \(C_n = \pv{t \mid t^n = 1}\) is the cyclic group of order \(n\). 
        \item \(D_{2n}= \pv{r,s \mid r^n = s^2 = 1, sr = r\inv s}\) is the dihedral group of order \(2n\). 
        \item \(Q_{4n}\) is the dicyclic group of order \(4n\). \item In particular, \(Q_8=\pv{i,j,k \mid i^2=j^2=k^2=ijk=-1}\) is the quaternion group. 
    \end{itemize}
\end{remark}

\subsection*{Acknowledgements} The second author would like to thank the SMALL REU program, and in particular the research group focused on MSTD sets, for raising questions related to balanced groups. This group included undergraduate students Julian Duvivier, Xiaoyao Huang, Ava Kennon, Say-Yeon Kwon, Arman Rysmakhanov, and Ren Watson, along with the second author. The project was advised by Steven J. Miller, to whom the second author is sincerely grateful for his guidance and support. The second author is also thankful to the other group members for their collaboration and insight.

\subsection*{Disclosure Statement}
The authors report there are no competing interests to declare.

\section{The Hamil-2-nian Groups}\label{sec:The-Hamil-2-nian-Groups}
The following class of groups will be useful to consider for our classification.
\begin{definition}
    A group \(G\) is \textbf{Dedekind} if every subgroup of 
    \(G\) is normal.
\end{definition}
Note that every abelian group is Dedekind. The smallest example of a non-abelian Dedekind group is $Q_8$.
\begin{definition}
    A group \(G\) is \textbf{Hamiltonian} if it is Dedekind and non-abelian.
\end{definition}

Importantly, the structure of Hamiltonian groups is well-understood due to 
a classical result by Dedekind and Baer. 
\begin{theorem}[\cite{baer}]\label{thm:baer-structure}
    A group is Hamiltonian if and only if it is isomorphic to \(Q_8\times P\times Q\), where \(P\) is an elementary abelian \(2\)-group (the direct product of copies of \(C_2\)), and \(Q\) is a torsion abelian group with all elements of odd order. 
\end{theorem}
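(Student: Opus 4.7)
The plan is to establish both directions of the biconditional. For the ``if'' direction, I verify that any $G = Q_8 \times P \times Q$ of the given form is Dedekind. Since $P$ and $Q$ lie in the center of $G$, conjugation of $(q,p,d)$ by any $(q',p',d')$ gives $(q'qq'^{-1}, p, d)$, and the defining feature of $Q_8$ is that every conjugate $q'qq'^{-1}$ equals $q$ or $q^{-1}$. So one must show that $\langle (q,p,d) \rangle$ contains $(q^{-1}, p, d)$; equivalently, there must exist an integer $n$ with $q^n = q^{-1}$, $p^n = p$, and $d^n = d$. Since $|q|$ divides $4$, $|p|$ divides $2$, and $|d|$ is odd, the Chinese Remainder Theorem produces such an $n$.

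For the ``only if'' direction, let $G$ be Hamiltonian. The strategy has three stages: (i) locate a $Q_8$ subgroup; (ii) split it off as a direct factor; (iii) analyze its centralizer. For (i), pick $a,b \in G$ with $[a,b] \neq 1$; because $\langle a\rangle$, $\langle b\rangle$, and $\langle ab\rangle$ are all normal, we obtain relations $bab^{-1} = a^i$ and $aba^{-1} = b^j$, together with compatibility constraints coming from normality of products. Iterating conjugation gives $i^2 \equiv 1 \pmod{|a|}$, and careful bookkeeping on the orders forces $\langle a,b \rangle \cong Q_8$ after replacing $a,b$ by suitable powers. For (ii), let $A = C_G(Q_8)$. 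Conjugation by any element of $G$ must restrict to an inner automorphism of $Q_8$, because an outer automorphism would send some cyclic subgroup of $Q_8$ outside itself, violating normality in $G$; so $G = Q_8 \cdot A$ with $Q_8 \cap A = Z(Q_8)$, and extracting a true direct factor reduces to finding a complement to $Z(Q_8)$ in the $2$-part of $A$.

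The main obstacle lies in stage (iii): showing $A$ is abelian and that its Sylow $2$-subgroup is elementary abelian. Abelianity follows because a non-abelian Dedekind $A$ would itself contain a copy of $Q_8$ centralizing the first, but $Q_8 \times Q_8$ fails to be Dedekind (the diagonal $\langle (i,i)\rangle$ is not invariant under conjugation by $(j,1)$, since that sends it to $\langle (i^{-1}, i)\rangle$). For the elementary-abelian condition, suppose for contradiction that some $a \in A$ has order $4$, and let $i \in Q_8$ be a non-central generator; then normality of $\langle (i,a)\rangle$ in $G$ combined with $(j,1)(i,a)(j,1)^{-1} = (i^{-1}, a)$ forces $(i,a)^n = (i^{-1}, a)$ for some integer $n$. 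This simultaneously requires $n \equiv -1 \pmod 4$ (in particular $n$ odd) and $a^n = a$, which combined gives $a^2 = 1$, contradicting $|a|=4$. Coupling this with the primary decomposition of the torsion abelian group $A$ into its $2$-part $P$ and odd part $Q$ yields $G \cong Q_8 \times P \times Q$, completing the classification.
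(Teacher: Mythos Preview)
The paper does not prove this theorem at all: it is quoted as a classical result of Dedekind and Baer and simply cited. So there is no ``paper's own proof'' to compare against; what you have written is an outline of the standard textbook argument, and it should be judged on its own.

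Your ``if'' direction is essentially fine, though you should remark that checking cyclic subgroups suffices (since $gHg^{-1}\subseteq H$ follows once every $\langle h\rangle$ is normal), and that non-abelianness is inherited from the $Q_8$ factor.

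The ``only if'' direction has two genuine soft spots. First, stage~(i) is where almost all the work lives, and you have hand-waved it: ``careful bookkeeping on the orders forces $\langle a,b\rangle\cong Q_8$'' hides the real argument. One must show that the commutator $c=[a,b]$ lies in $\langle a\rangle\cap\langle b\rangle$, has order~$2$, is central in $\langle a,b\rangle$, and that after replacing $a,b$ by suitable powers one gets $a^2=b^2=c$ and $b^{-1}ab=a^{-1}$. None of this is automatic from $i^2\equiv 1\pmod{|a|}$.

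Second, in stage~(iii) you slip into direct-product notation $(i,a)$, $(j,1)$, but at that point you only know $G=Q_8\cdot A$ with $Q_8\cap A=\langle -1\rangle$, a central product rather than a direct one. Concretely: if $A$ were non-abelian you would obtain a second $Q_8'\subseteq A$, but $Q_8$ and $Q_8'$ may share the element $-1$, so $\langle Q_8,Q_8'\rangle$ need not be $Q_8\times Q_8$; you must also rule out the central product $Q_8*_{\langle -1\rangle}Q_8$ (it fails to be Dedekind for a similar reason: $i_1i_2$ has order~$2$ and is conjugated by $j_1$ to $-i_1i_2\neq i_1i_2$). Likewise, for $a\in A$ of order~$4$ the correct object is the element $ia\in G$, and normality of $\langle ia\rangle$ forces $i^{n}a^{n}=i^{-1}a$ for some $n$, i.e.\ $i^{n+1}=a^{1-n}\in Q_8\cap A=\{1,-1\}$; analyzing the two cases gives the contradiction. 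Your coordinate-wise conditions $q^n=q^{-1}$, $a^n=a$ are not what one actually obtains. Finally, you invoke the primary decomposition of ``the torsion abelian group $A$'' without having shown $A$ is torsion; the same $ia$ argument applied to an element of infinite order supplies this, but it needs to be said.
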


We now consider finite Hamiltonian \(2\)-groups (Hamil-\(2\)-nian groups), which by \autoref{thm:baer-structure} are of the form \(Q_8 \times (C_2)^n\). 

\begin{proposition}\label{prop:Hamil-2-nian}
    \(Q_8 \times (C_2)^n\) is balanced for all \(n\ge 0\). 
\end{proposition}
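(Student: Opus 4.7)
The plan is to establish the stronger statement that $AA\inv = A\inv A$ as sets for every $A\subseteq G := Q_8\times (C_2)^n$; equal cardinality is then immediate. The key structural fact to exploit is that every element of $G$ has square in the two-element central subgroup $\{1, z\}$, where $z = (-1, 0, \ldots, 0)$ is the unique involution in the center of $Q_8$ (viewed inside $G$). Writing $g = (q, c)$ with $q \in Q_8$ and $c \in (C_2)^n$, one has $g^2 = (q^2, 0)$, and $q^2 = 1$ exactly when $q \in Z(Q_8) = \{1,-1\}$, so $\{g\in G : g^2 = 1\} = Z(Q_8) \times (C_2)^n = Z(G)$. This yields the dichotomy $g\inv = g$ if $g\in Z(G)$ and $g\inv = zg$ otherwise, which is the engine of the proof.

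Next I would partition $A = B \sqcup C$ with $B = A\cap Z(G)$ and $C = A\setminus B$, and prove $AA\inv \subseteq A\inv A$ by a short case analysis of $ab\inv$ for $a, b \in A$. If $b \in B$, then $b\inv = b$ is central and $ab\inv = ab = ba = b\inv a \in A\inv A$. If $b \in C$ and $a \in B$, then $ab\inv = a(zb) = z(ba) = b\inv a \in A\inv A$, using centrality of $a$ and $z$. If $a, b \in C$, then $ab\inv = a(zb) = z(ab) = (za)b = a\inv b \in A\inv A$. The reverse inclusion $A\inv A \subseteq AA\inv$ then follows by applying this inclusion with $A\inv$ in place of $A$, since $(A\inv)(A\inv)\inv = A\inv A$ and $(A\inv)\inv A\inv = A A\inv$.

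Since the case analysis is exhaustive and each case reduces to a single manipulation with the central element $z$, I do not anticipate any substantial obstacle. The only care needed is in verifying the characterization $\{g \in G : g^2 = 1\} = Z(G)$, which ensures that the dichotomy $g\inv \in \{g, zg\}$ is valid for \emph{every} element of $G$ and that the case split on $a, b \in B$ versus $a, b \in C$ is exhaustive.
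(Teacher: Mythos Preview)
Your proof is correct and follows essentially the same strategy as the paper: both establish the stronger equality $AA\inv = A\inv A$ via the structural dichotomy $g\inv \in \{g, zg\}$ (equivalently, the paper's ``$-z = z\inv$ for non-central $z$''). The only difference is organizational: the paper splits into cases according to whether a pair $x,y$ commutes or anti-commutes and shows $\{xy\inv, yx\inv\} = \{x\inv y, y\inv x\}$, whereas you split according to whether each of $a,b$ lies in $Z(G)$ and send each $ab\inv$ directly to a specific element of $A\inv A$; your version is arguably a bit more streamlined, but the underlying idea is identical.
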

\begin{proof}
    Let \(A \subseteq Q_8 \times (C_2)^n\). We claim that in fact, 
    \[
        AA\inv = A\inv A. 
    \]
    We say that \(x,y\in Q_8\times (C_2)^n\) \textbf{anti-commute} if 
    \(xy = -yx\). Every pair of elements of \(Q_8\) either commute or anti-commute, 
    so the same is true for \(Q_8 \times (C_2)^n\). In order to show that $AA^{-1} = A^{-1}A$, it suffices to prove that for all $x, y \in A$:
    \[
        \ps{xy\inv, yx\inv} = \ps{x\inv y, y\inv x}.
    \]

    \begin{itemize}
        \item \underline{Case 1: \(x\) and \(y\) anti-commute.} 
        In this case:
        \[
            \ps{xy\inv, yx\inv} = \ps{-x\inv y, -y\inv x}.
        \]
        Since \(x\) and \(y\) anti-commute, the elements $x\inv y$ and $y\inv x$ are not central. Since the inverse of any non-central element $z$ of $Q_8 \times (C_2)^n$ is equal to $-z$, it follows that:
        \[
            \ps{-x\inv y, -y\inv x} = \ps{(x\inv y)\inv, (y\inv x)\inv} = \ps{y\inv x, x\inv y}
        \]
        from which the desired equality follows.

        \item \underline{Case 2: \(x\) and \(y\) commute.}
        Hence \(x\inv\) commutes with \(y\) and \(y\inv\) commutes with 
        \(x\). It immediately follows that 
        \(\ps{xy\inv, yx\inv} = \ps{x\inv y, y\inv x}\). \qedhere
    \end{itemize}
\end{proof}

Thus, there exists an infinite family of distinct non-abelian balanced groups. 

\section{Weakly Balanced Groups}\label{sec:Weakly-Balanced-Groups}

In \cite{tao}, Tao makes the following observation: let $G$ be a finite group and $H \le G$ be non-normal. If $g \not\in N_G(H)$ and $A = H \cup gH$, then $|AA^{-1}| = \mathcal{O}(|H|)$, but $|A^{-1}A|$ can generally be bigger. The following proposition make this precise:

\begin{proposition}\label{prop:balanced-equivalent-coset-condition}
    Let $H \le G$ be non-normal. Let $g \in G - N_G(H)$ and $A = H \cup gH$. Then, the following are equivalent:
    \begin{enumerate}
        \item $|AA^{-1}| = |A^{-1}A|$
        \item $|H \cap gHg^{-1}| = |gH \cap Hg^{-1}| = \tfrac{1}{2}|H|$.
    \end{enumerate}
\end{proposition}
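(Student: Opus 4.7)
The plan is to compute both $AA^{-1}$ and $A^{-1}A$ explicitly as unions of cosets (respectively, cosets and double cosets) of $H$, count their sizes via inclusion-exclusion and the standard double-coset formula, and reduce the desired equivalence to a short algebraic equation in $|H|$ and the two intersection sizes.

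Expanding $A^{-1} = H \cup Hg^{-1}$ gives
\[
AA^{-1} = H \cup gH \cup Hg^{-1} \cup gHg^{-1}, \qquad A^{-1}A = H \cup HgH \cup Hg^{-1}H.
\]
Set $n = |H|$, $d = |H \cap gHg^{-1}|$, and $e = |gH \cap Hg^{-1}|$. First I would verify that, using $g \notin H$, among the six pairs of sets in $AA^{-1}$ the only non-empty intersections are $H \cap gHg^{-1}$ and $gH \cap Hg^{-1}$; all higher intersections vanish because each contains one of the empty pairs, so inclusion-exclusion gives $|AA^{-1}| = 4n - d - e$. For $A^{-1}A$, the standard coset-counting formula gives $|HgH| = |Hg^{-1}H| = n^2/d$. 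The key observation here is that $HgH = Hg^{-1}H$ iff $g^{-1} \in HgH$, which by writing $g^{-1} = h_1 g h_2$ and rearranging as $g h_2 = h_1^{-1} g^{-1}$ is equivalent to $gH \cap Hg^{-1} \ne \emptyset$, i.e., $e \ge 1$. Hence
\[
|A^{-1}A| = \begin{cases} n + n^2/d & \text{if } e \ge 1, \\ n + 2n^2/d & \text{if } e = 0. \end{cases}
\]

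For $(\Leftarrow)$, plugging in $d = e = n/2$ yields $|AA^{-1}| = 3n$, and since $e = n/2 \ge 1$ we land in the first case, giving $|A^{-1}A| = n + 2n = 3n$. For $(\Rightarrow)$, I would split on $e$. If $e = 0$, the equation $4n - d = n + 2n^2/d$ simplifies to $(d-n)(d-2n) = 0$; but $d \le n$, and $g \notin N_G(H)$ forces $d < n$, a contradiction. So $e \ge 1$, and the equation becomes $e = 3n - d - n^2/d$. Writing $d = n/k$ with $k \ge 2$ an integer (using $d \mid n$ by Lagrange and $d < n$), the constraint $e \ge 0$ becomes $k^2 - 3k + 1 \le 0$; the only integer $k \ge 2$ satisfying this is $k = 2$, which gives $d = n/2$ and then $e = n/2$.

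The main subtlety is the case split on whether $HgH$ and $Hg^{-1}H$ coincide: pinning down that this coincidence is equivalent to $gH \cap Hg^{-1}$ being non-empty is what makes the two-branch formula for $|A^{-1}A|$ clean and ensures the algebraic conclusion has no loose ends. Once that is in hand, the remainder is routine inclusion-exclusion plus a short quadratic inequality in $k$.
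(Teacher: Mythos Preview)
Your argument is correct and follows essentially the same route as the paper: both proofs compute the same decompositions $AA^{-1}=H\cup gH\cup Hg^{-1}\cup gHg^{-1}$ and $A^{-1}A=H\cup HgH\cup Hg^{-1}H$, use inclusion--exclusion and the double-coset count $|HgH|=n^2/d$, and hinge on the observation that $HgH=Hg^{-1}H$ iff $gH\cap Hg^{-1}\neq\emptyset$. The only cosmetic difference is that the paper rules out the bad cases via the inequalities $|A^{-1}A|\ge 5|H|$ (when $HgH\ne Hg^{-1}H$) and $|A^{-1}A|\ge 4|H|$ (when $|HgH|\ge 3|H|$), whereas you set up the exact equations and solve the resulting quadratic in $d$ (case $e=0$) and the inequality $k^2-3k+1\le 0$ (case $e\ge 1$); the underlying ideas are identical.
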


\begin{proof}
    First, we prove that $(1)$ implies $(2)$. We have:
    \begin{enumerate}
        \item $AA^{-1} = H \cup gH \cup Hg^{-1} \cup gHg^{-1}$.
        \item $A^{-1}A = H \cup HgH \cup Hg^{-1}H$
    \end{enumerate}
    On one hand, $|AA^{-1}| < |H| + |gH| + |Hg^{-1}| + |gHg^{-1}| = 4|H|$, where the inequality is strict since $1 \in H \cap gHg^{-1}$. On the other hand, $A^{-1}A$ contains the double cosets $HgH$ and $Hg^{-1}H$, both unequal to $H$ since $g \not\in H$. The sizes of these double cosets are both equal to $\tfrac{|H|^2}{|H \cap gHg^{-1}|}$ by the orbit-stabilizer theorem. Since $g \not\in N_G(H)$, this is a multiple of $|H|$ and at least $2|H|$. 
    \begin{itemize}
        \item If $HgH \neq Hg^{-1}H$, then $H$, $HgH$, and $Hg^{-1}H$ would be pairwise disjoint double cosets, so $|A^{-1}A| = 5|H| > |AA^{-1}|$, which is a contradiction. Thus, we must have $HgH = Hg^{-1}H$.
        \item If $|HgH| \ge 3|H|$, then $|A^{-1}A| \ge |H| + 3|H| > |AA^{-1}|$, which is also a contradiction. Thus, $|HgH| = |Hg^{-1}H| = 2|H|$ and $|H \cap gHg^{-1}| = \tfrac{1}{2}|H|$.
    \end{itemize} 
    
    We now have the precise equality $|A^{-1}A| = |H| + 2|H| = 3|H|$, so we must also have $|AA^{-1}| = 3|H|$. Observe that both $gH$ and $Hg^{-1}$ are both non-trivial left or right cosets of both $H$ and $gHg^{-1}$, so:
    \[|AA^{-1}| = |H \cup gHg^{-1}| + |gH \cup Hg^{-1}| = 4|H| - |H \cap gHg^{-1}| - |gH \cap Hg^{-1}|\]
    is equal to $3|H|$. We have already calculated that $|H \cap gHg^{-1}| = \tfrac{1}{2}|H|$, so it follows that $|gH \cap Hg^{-1}| = \tfrac{1}{2}|H|$, showing $(1)$ implies $(2)$.

    For the other direction, $|H \cap gHg^{-1}| = |gH \cap Hg^{-1}| = \tfrac{1}{2}|H|$ gives $|AA^{-1}| = 3|H|$ as in the above calculation. It also implies that $|HgH| = |Hg^{-1}H| = 2|H|$. We have $HgH = Hg^{-1}H$ since $gH \cap Hg^{-1} \neq \emptyset$. Thus, $|A^{-1}A| = 3|H|$ as well, proving that $(2)$ implies $(1)$.
\end{proof}

\begin{definition}
    We say that a finite group $G$ is \textbf{weakly balanced} if $|AA^{-1}| = |A^{-1}A|$ when $A = H \cup gH$ for any non-normal subgroup $H$ and $g \not\in N_G(H)$.
\end{definition}

It is clear that every balanced group is weakly balanced.

\begin{proposition}\label{prop:semidirect product of 2-group and odd abelian group}
    If $G$ is weakly balanced, then $G = P \ltimes Q$ is a semidirect product, where $P$ is a $2$-group and $Q$ is an odd abelian group.
\end{proposition}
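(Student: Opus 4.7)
The plan is to extract from the weakly balanced hypothesis the single arithmetic consequence that every non-normal subgroup of $G$ has even order, and then use that to force the standard structure of a 2-nilpotent group with abelian odd part.

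First I would unpack what weakly balanced gives. Let $H \le G$ be any non-normal subgroup and pick $g \in G - N_G(H)$ (which exists by non-normality). Applying \autoref{prop:balanced-equivalent-coset-condition} to $A = H \cup gH$ shows $|H \cap gHg^{-1}| = \tfrac{1}{2}|H|$. Since $H \cap gHg^{-1}$ is a subgroup, its cardinality is a positive integer, so $|H|$ must be even. Contrapositively, every subgroup of odd order in $G$ is normal. In particular, for each odd prime $p$ dividing $|G|$, the Sylow $p$-subgroup is unique and normal in $G$.

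Next I would build $Q$. The odd Sylow subgroups of $G$ are all normal with pairwise coprime orders, so they pairwise commute and their product $Q$ is their internal direct product; it is a normal subgroup whose order is the odd part of $|G|$, i.e., a normal Hall $2'$-subgroup. By Schur--Zassenhaus, $Q$ admits a complement $P$, which must be a Sylow $2$-subgroup of $G$, and $G = P \ltimes Q$.

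Finally I would show $Q$ is abelian. For each odd prime $p$ dividing $|G|$, every subgroup $K$ of the Sylow $P_p$ has odd order, and so by the first step is normal in $G$, hence a fortiori normal in $P_p$. Thus $P_p$ is Dedekind. If $P_p$ were non-abelian, it would be Hamiltonian, but \autoref{thm:baer-structure} forces every Hamiltonian group to contain a copy of $Q_8$ and therefore have order divisible by $8$, contradicting $|P_p|$ odd. Hence each $P_p$ is abelian, and as a direct product of abelian groups $Q$ is abelian, completing the proof.

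The only substantive step is the first one, where one must notice that the numerical content of weak balance on cosets of $H$ already forces $2 \mid |H|$; once that is in hand, the remaining arguments are standard applications of Sylow theory, Schur--Zassenhaus, and Baer's classification of Dedekind groups, so the main obstacle (if any) is simply recognizing that no more delicate use of the weakly balanced hypothesis is required.
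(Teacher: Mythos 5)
Your proposal is correct and follows essentially the same route as the paper: weak balance forces every odd-order subgroup to be normal (since an odd group has no index-$2$ subgroup), so the odd Sylow subgroups are normal and Dedekind, hence abelian by Baer's theorem, and their product $Q$ is a normal Hall $2'$-subgroup complemented by a Sylow $2$-subgroup $P$. The only cosmetic difference is that you invoke Schur--Zassenhaus for the complement, where the paper just notes directly that a Sylow $2$-subgroup complements the normal odd Hall subgroup.
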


\begin{proof}
    Let $H$ be any odd subgroup of $G$. By \autoref{prop:balanced-equivalent-coset-condition}, $H$ must be normal since we cannot have $[H: H \cap gHg^{-1}] = 2$ for any $g \in G$. Furthermore, every subgroup of $H$ is normal in $G$, and consequently $H$. It follows that $H$ is Dedekind, so by \autoref{thm:baer-structure}, it must be abelian.

    Now, the Sylow $p$-subgroups of $G$ for odd primes $p$ are normal and abelian, so the subgroup $Q$ they generate is isomorphic to their direct product, which must be odd abelian. Furthermore, $Q$ is a normal complement to $P$, so $G = P \ltimes Q$.
\end{proof}

For the remainder of this paper, we shall use $P$ and $Q$ as in the statement of \autoref{prop:semidirect product of 2-group and odd abelian group}. Next, we study the possibilities for the map $\Phi:P \to \Aut(Q)$ associated to the conjugation action of $P$ on $Q$. Recall that the map is defined by $\Phi: x \mapsto \Phi_x$, where $\Phi_x(y) := xyx^{-1}$ for all $x \in P$ and $y \in Q$.

\begin{lemma}\label{lem:squares in H with probability 1/2}
    If $G$ is weakly balanced and $H$ is a non-normal subgroup, then exactly half the elements of any (left) $H$-coset outside $N_G(H)$ have squares in $H$.
\end{lemma}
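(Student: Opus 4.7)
The key observation is that an element $x$ of a coset $gH$ satisfies $x^2 \in H$ precisely when $x$ also lies in the \emph{right} coset $Hg^{-1}$. Indeed, writing $x = gh$ with $h \in H$, we have $x^2 = ghgh \in H$ if and only if $ghg \in Hh^{-1} = H$, which is equivalent to $gh \in Hg^{-1}$. Thus the set of elements of $gH$ whose square lies in $H$ is exactly $gH \cap Hg^{-1}$.

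The plan is then to invoke \autoref{prop:balanced-equivalent-coset-condition} directly. Since $G$ is weakly balanced, $H$ is non-normal, and $g \notin N_G(H)$, the set $A = H \cup gH$ satisfies $|AA^{-1}| = |A^{-1}A|$, and the proposition forces
\[
    |gH \cap Hg^{-1}| = \tfrac{1}{2}|H|.
\]
Combined with the equivalence established in the first paragraph, this shows that exactly $|H|/2$ of the $|H|$ elements in $gH$ have squares in $H$, which is the desired claim.

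There is essentially no obstacle here: the argument reduces entirely to the bijection $h \leftrightarrow gh$ between $H$ and $gH$ together with the coset-intersection computation already packaged into \autoref{prop:balanced-equivalent-coset-condition}. The only point that requires a moment of care is verifying that the membership condition $ghgh \in H$ rearranges cleanly into $gh \in Hg^{-1}$, which follows immediately from $H$ being closed under multiplication on the right by its own elements and their inverses.
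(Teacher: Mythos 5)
Your proof is correct and follows essentially the same route as the paper's: both identify the elements of $gH$ with squares in $H$ as exactly $gH \cap Hg^{-1}$ and then apply \autoref{prop:balanced-equivalent-coset-condition} to conclude this intersection has size $\tfrac{1}{2}|H|$. Your version just spells out the rearrangement $ghgh \in H \iff gh \in Hg^{-1}$ slightly more explicitly.
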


\begin{proof}
    Let $gH$ be any left coset outside $N_G(H)$. For any $x \in gH$, we have $x^2 \in H$ if and only if $x \in Hx^{-1}$, which is the same as $Hg^{-1}$. Thus, the elements of $gH$ with squares in $H$ are precisely the elements in $gH \cap Hg^{-1}$. By \autoref{prop:balanced-equivalent-coset-condition}, this is exactly half the elements of $gH$.
\end{proof}

\begin{lemma}
    The map $\Phi: P \to \Aut(Q)$ has image contained in $\{\pm 1\}$, where $1$ and $-1$ denote the identity and inverse automorphisms of $Q$.
\end{lemma}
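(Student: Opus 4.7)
The plan is to show that for every $x \in P$, the automorphism $\Phi_x \in \Aut(Q)$ is either the identity or inversion. If $\Phi_x = 1$, there is nothing to check, so assume some $y \in Q$ with $\Phi_x(y) \neq y$. The guiding idea is to apply \autoref{lem:squares in H with probability 1/2} to the subgroup $H = \langle x \rangle$, using information about squares in cosets of the form $yH$ to pin down the action of $\Phi_x$ on each element of $Q$.

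The first step is the following computation inside the semidirect product: for any $y \in Q$ and $i \in \Z$,
\[
    y x y^{-1} = \pp{y \Phi_x(y)^{-1}} x, \qquad (y x^i)^2 = y\, \Phi_x^i(y)\, x^{2i}.
\]
Since $\langle x \rangle \cap Q = 1$, the first identity shows that $y \in N_G(H)$ if and only if $\Phi_x(y) = y$. In particular, the existence of any $y$ with $\Phi_x(y) \neq y$ makes $H$ non-normal, and for every such $y$ the entire coset $yH$ is disjoint from $N_G(H)$. The second identity shows that $(yx^i)^2 \in H$ if and only if $\Phi_x^i(y) = y^{-1}$.

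Next I would apply \autoref{lem:squares in H with probability 1/2} to $yH$: exactly $|x|/2$ of the exponents $i \in \{0, 1, \dots, |x|-1\}$ must satisfy $\Phi_x^i(y) = y^{-1}$. A short orbit-stabilizer count on the cyclic group $\langle \Phi_x \rangle$ acting on $Q$ shows that this count is either $0$ or $|x|/k$, where $k$ is the size of the $\langle \Phi_x \rangle$-orbit of $y$. Matching against $|x|/2$ forces $k = 2$, and combined with $\Phi_x(y) \neq y$ this pins down $\Phi_x(y) = y^{-1}$. Therefore every $y \in Q$ satisfies $\Phi_x(y) \in \{y, y^{-1}\}$.

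Finally, I would upgrade this pointwise statement to the global conclusion. The sets $A = \{y \in Q : \Phi_x(y) = y\}$ and $B = \{y \in Q : \Phi_x(y) = y^{-1}\}$ are both subgroups of $Q$ whose union is all of $Q$, and since $|Q|$ is odd they intersect only in the identity. Since no group is the union of two proper subgroups, either $Q = A$ or $Q = B$, giving $\Phi_x \in \{\pm 1\}$. The main obstacle is the orbit-stabilizer bookkeeping in the middle step: one must be careful because $|\Phi_x|$ may be a proper divisor of $|x|$, so the exponents $i$ yielding $\Phi_x^i(y) = y^{-1}$ come in packets of size $|x|/|\Phi_x|$, and only after accounting for this does the count $|x|/2$ translate into orbit size exactly $2$.
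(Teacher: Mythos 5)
Your proof is correct, and it runs on the same engine as the paper's: apply \autoref{lem:squares in H with probability 1/2} to a coset $qH$ with $q\in Q$ and $H\le P$, use the identity $(qh)^2=q\,\Phi_h(q)\,h^2$ together with $P\cap Q=1$ to translate ``square lies in $H$'' into ``$\Phi_h(q)=q^{-1}$,'' and close with the observation that the fixed set and the inverted set of $\Phi_x$ are subgroups covering $Q$ (your ``no group is a union of two proper subgroups'' step is exactly the paper's $y_1y_2$ trick, just phrased structurally). The genuine difference is the choice of $H$. The paper takes $H=P$ itself: it first checks $N_G(P)\cap Q=Z(G)\cap Q$, so any non-central $y\in Q$ witnesses non-normality of $P$, and then a single application of the lemma to the coset $yP$ shows that the set $\{x\in P:\Phi_x(y)=y^{-1}\}$ has size $|P|/2$; since that set is a coset of the stabilizer of $y$, the $P$-orbit of $y$ is immediately $\{y,y^{-1}\}$ with no further counting. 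You instead take $H=\langle x\rangle$ for each $x$ separately, which makes the non-normality check a one-line conjugation computation and avoids the $N_G(P)\cap Q$ preliminary, but costs you the orbit-stabilizer bookkeeping inside $\langle\Phi_x\rangle$ (the fibers of $i\mapsto\Phi_x^i(y)$ having size $|x|/k$ where $k$ is the orbit length) that you correctly identify as the delicate point and handle properly. Both routes are complete and of comparable length; the paper's is slightly slicker because varying $x$ over all of $P$ at once turns the half-counting directly into an index-two stabilizer, while yours is more self-contained at the level of a single element $x$.
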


\begin{proof}
    For any $y \in N_G(P) \cap Q$ and $x \in P$, $yxy^{-1} = y\Phi_x(y)^{-1}x \in P$, so we must have $y = \Phi_x(y)$. Since $x \in P$ was arbitrary, $y$ must commute with $P$ (and $Q$ since it is abelian), implying that $y \in Z(G)$. In other words, $N_G(P) \cap Q = Z(G) \cap Q$.

    Now, let $y \in Q$ be any element. If $y \in Z(G) \cap Q$, then the orbit of $y$ under the conjugation action of $P$ is equal to $\{y\}$. Otherwise, we have $y \not\in N_G(P)$. In this case, for any $x \in P$, we have $(yx)^2 = y\Phi_x(y)x^2$ in $P$ if and only if $\Phi_x(y) = y^{-1}$. By \autoref{lem:squares in H with probability 1/2}, this happens for half the elements $x \in P$, forcing the orbit of $y$ under the conjugation action of $P$ to be $\{y, y^{-1}\}$.

    Finally, assume for the sake of contradiction that there exist $x \in P$ and $y_1, y_2 \in Q$ such that $\Phi_x(y_1) = y_1$ and $\Phi_x(y_2) = y_2^{-1}$. In this case, $\Phi_x(y_1y_2) = y_1y_2^{-1}$ is neither equal to $y_1y_2$, nor its inverse, which is a contradiction. Thus, $\Phi_x$ must be equal to $\pm 1$ for all $x \in P$, which proves the proposition.
\end{proof}

\begin{definition}
    We say that $G = P \ltimes Q$ is a \textbf{sign semidirect product} if $Q$ is non-trivial and the map $\Phi: P \to \Aut(Q)$ has image equal to $\{\pm 1\}$.
\end{definition}

\begin{corollary}\label{cor:three cases of weakly balanced groups}
    Let $G$ be a non-abelian weakly balanced group. Let $G = P \ltimes Q$ as in \autoref{prop:semidirect product of 2-group and odd abelian group}. Exactly one of the following is true:
    \begin{enumerate}
        \item $G = P \times Q$ is a direct product, where $Q$ is non-trivial.
        \item $G = P \ltimes Q$ is a sign semidirect product.
        \item $G = P$ is a non-abelian $2$-group.
    \end{enumerate}
\end{corollary}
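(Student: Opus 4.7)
The plan is to show that this corollary is essentially a direct consequence of the two immediately preceding results combined with a trichotomy on the pair $(Q, \Phi)$. By \autoref{prop:semidirect product of 2-group and odd abelian group}, we already have $G = P \ltimes Q$ with $P$ a $2$-group and $Q$ an odd abelian group. By the previous lemma, the conjugation map $\Phi : P \to \Aut(Q)$ has image contained in $\{\pm 1\}$. So the only remaining task is to split into cases based on (a) whether $Q$ is trivial and (b) whether $\Phi$ is trivial.

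First, I would dispose of the case $Q = 1$. Then $G = P$, and since $G$ is assumed non-abelian, $P$ must be a non-abelian $2$-group, placing us in case $(3)$. Observe that in this case $(1)$ and $(2)$ are automatically excluded, since both require $Q$ to be non-trivial (case $(1)$ by its explicit hypothesis and case $(2)$ by the definition of a sign semidirect product).

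Next I would handle the case $Q \neq 1$. Here the image of $\Phi$ is a subgroup of $\{\pm 1\}$, so it is either $\{1\}$ or $\{\pm 1\}$. If $\Phi$ is trivial, then every element of $P$ commutes with every element of $Q$, so the semidirect product collapses to the direct product $G = P \times Q$ with $Q$ non-trivial, which is case $(1)$. If $\Phi$ is non-trivial, then its image is exactly $\{\pm 1\}$, which is precisely the definition of a sign semidirect product, case $(2)$. In this branch, case $(3)$ is excluded since $Q \neq 1$ forces $G \neq P$, and cases $(1)$ and $(2)$ are mutually exclusive because one has trivial $\Phi$ and the other has $\Phi$ with non-trivial image $\{\pm 1\}$.

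There is no real obstacle here; the work was already done in \autoref{prop:semidirect product of 2-group and odd abelian group} and the preceding lemma. The only thing to be careful about is the "exactly one" assertion, which I would make explicit by noting the mutual exclusivity observations above rather than leaving them implicit.
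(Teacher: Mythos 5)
Your proof is correct and matches the paper's (implicit) reasoning exactly: the paper states this as an immediate corollary of \autoref{prop:semidirect product of 2-group and odd abelian group} and the preceding lemma, with the same trichotomy on whether $Q$ is trivial and whether $\Phi$ is trivial. Your explicit treatment of the mutual exclusivity is a reasonable addition but does not change the substance.
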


\section{Classification of Balanced Groups}\label{sec:Classification-of-Balanced-Groups}

Note that abelian groups are trivially balanced, so if suffices to 
focus on the non-abelian case. 
We classify all balanced groups by using \autoref{cor:three cases of weakly balanced groups} to divide into the three cases.
\autoref{thm:classification-of-balanced-groups} will follow from proving that the non-abelian balanced examples in these cases are as follows: 
\begin{enumerate}
    \item Direct Product: No examples.
    \item Sign Semidirect Product: $D_6, D_{10}, Q_{12}, Q_{20}$.
    \item Non-abelian $2$-groups: $D_8$, $Q_{16}$, $C_4 \ltimes C_4$, and $Q_8 \times (C_2)^n$ for $n \ge 0$.
\end{enumerate}
The following observation will be crucial in our classification:

\begin{lemma}\label{lem:subquotients}
    Subquotients of (weakly) balanced groups are (weakly) balanced.
\end{lemma}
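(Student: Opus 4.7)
The plan is to handle the two parts of ``subquotient''---subgroups and quotients---separately, and in each case verify the claim simultaneously for the balanced and weakly balanced properties.

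For subgroups, let $H \le G$. The balanced case is immediate: any subset $A \subseteq H$ is also a subset of $G$, and the products $AA^{-1}$ and $A^{-1}A$ are intrinsic to $A$, so balance in $G$ directly implies balance in $H$. For the weakly balanced case, suppose $K \le H$ is non-normal in $H$ and $g \in H \setminus N_H(K)$. I claim $K$ is also non-normal in $G$ and $g \notin N_G(K)$: the former is contrapositive (a subgroup normal in $G$ is normal in every intermediate subgroup), and the latter follows from $N_H(K) = N_G(K) \cap H$, so an element of $H$ outside $N_H(K)$ cannot lie in $N_G(K)$. Therefore the set $A = K \cup gK$ is a witness of the form appearing in the definition of weakly balanced for $G$, and we conclude $|AA^{-1}| = |A^{-1}A|$.

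For quotients, let $N \trianglelefteq G$ with projection $\pi : G \surjto G/N$. The key observation is that for any subset $\bar{A} \subseteq G/N$, its preimage $A := \pi\inv(\bar{A})$ is a union of cosets of $N$, and because $N$ is normal, $AA\inv$ and $A\inv A$ are also unions of cosets of $N$, satisfying
\[
    |AA\inv| = |N|\cdot|\bar{A}\bar{A}\inv|, \qquad |A\inv A| = |N|\cdot|\bar{A}\inv\bar{A}|.
\]
The balanced case then follows by dividing through by $|N|$: equality in $G$ implies equality in $G/N$.

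For the weakly balanced case of a quotient, suppose $\bar{K} \le G/N$ is non-normal and $\bar{g} \notin N_{G/N}(\bar{K})$, and set $\bar{A} = \bar{K} \cup \bar{g}\bar{K}$. Define $K := \pi\inv(\bar{K})$ and let $g \in G$ be any lift of $\bar{g}$; then $\pi\inv(\bar{A}) = K \cup gK$. Non-normality and the non-normalizer condition descend to $G/N$, so in particular $K$ is non-normal in $G$ (else $\bar{K}$ would be normal) and $g \notin N_G(K)$ (else $\bar{g}$ would normalize $\bar{K}$). Thus $A := K \cup gK$ is a valid witness set for weak balance of $G$, and dividing the resulting equality by $|N|$ as above gives $|\bar{A}\bar{A}\inv| = |\bar{A}\inv\bar{A}|$.

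There is essentially no main obstacle; the argument is a direct unwinding of definitions. The only points requiring care are the bookkeeping steps in the weakly balanced case: checking that non-normality and failure-to-normalize pass between $G$ and both its subgroups and quotients in the correct direction, so that the witness sets guaranteed in one level of the subquotient structure produce witness sets at the other level.
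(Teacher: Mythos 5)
Your proof is correct and follows essentially the same route as the paper: subgroups are handled by restriction, and quotients by the observation that the preimage of a subset (and of a witness set $\tilde{H}\cup\tilde{g}\tilde{H}$) is a union of cosets of the kernel, so all relevant cardinalities scale by the kernel's order. You simply spell out the normality and normalizer bookkeeping that the paper leaves implicit.
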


\begin{proof}
    It is clear that subgroups of (weakly) balanced groups are weakly balanced. Similarly, for any quotient $G \twoheadrightarrow \tilde{G}$, the claim $|\tilde{A}\tilde{A}^{-1}| = |\tilde{A}^{-1}\tilde{A}|$ for some $\tilde{A} \subseteq G$ is equivalent to $|AA^{-1}| = |A^{-1}A|$ for its preimage $A \subseteq G$. Furthermore, if $\tilde{A}$ has the form $\tilde{H} \cup \tilde{g}\tilde{H}$, then $A = H \cup gH$, where $H$ is the preimage of $\tilde{H}$ and $g$ is any representative of $\tilde{g}$ in $G$. Thus, if $G$ is (weakly) balanced, then so is $\tilde{G}$. The lemma follows from combining the result for subgroups and quotients.
\end{proof}

\subsection{Direct Products}

Let $G = P \times Q$, where $P$ is a non-abelian $2$-group and $Q$ is a non-trivial odd abelian group.

\begin{lemma}\label{lem:Direct products are weakly balanced iff Hamiltonian}
    $G$ is weakly balanced if and only if $P$ (equivalently $G$) is Hamiltonian. 
\end{lemma}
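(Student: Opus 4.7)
The plan is to prove the two directions separately. The backward direction is immediate from \autoref{thm:baer-structure}: if $P$ is Hamiltonian, then $P \cong Q_8 \times (C_2)^m$, so $G = P \times Q \cong Q_8 \times (C_2)^m \times Q$ is itself Hamiltonian (absorbing $Q$ into the odd-abelian factor of Baer's decomposition). Thus every subgroup of $G$ is normal, and the weak balancedness condition holds vacuously.

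The substantive work is the forward direction, which I would prove by contrapositive. Suppose $P$ is not Hamiltonian; since $P$ is non-abelian, this means $P$ is not Dedekind, so $P$ has a non-normal (hence non-trivial) subgroup $H_0$ with some $g_0 \in P \setminus N_P(H_0)$. The key idea is to exploit the non-trivial odd factor $Q$: choose any $y \in Q$ with $y \neq 1$. Set $H := H_0 \times \{1\} \le G$ and $g := (g_0, y)$. Since conjugation in $G = P \times Q$ affects only the $P$-coordinate, the normalizer of $H$ in $G$ is $N_P(H_0) \times Q$, so $H$ is non-normal in $G$ and $g \not\in N_G(H)$. This puts us in position to apply \autoref{prop:balanced-equivalent-coset-condition}.

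The verification is then a short coset calculation. We have $gH = \{(g_0 h, y) : h \in H_0\}$ and $Hg^{-1} = \{(h g_0^{-1}, y^{-1}) : h \in H_0\}$, so any common element forces the $Q$-coordinates to match, giving $y = y^{-1}$, i.e., $y^2 = 1$. Since $y$ has odd order and $y \neq 1$, this is impossible, so $gH \cap Hg^{-1} = \emptyset$. But $H_0$ is non-trivial, so $\tfrac{1}{2}|H| \geq 1$, and the condition $|gH \cap Hg^{-1}| = \tfrac{1}{2}|H|$ from \autoref{prop:balanced-equivalent-coset-condition} fails. Hence $A := H \cup gH$ satisfies $|AA^{-1}| \neq |A^{-1}A|$, showing $G$ is not weakly balanced.

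I do not anticipate any significant obstacle; the only small points to keep track of are (i) that a non-normal subgroup is necessarily non-trivial, so $|H|/2 > 0$, and (ii) that elements of $Q$ have odd order, so $y \neq 1$ implies $y \neq y^{-1}$. The structure theorem \autoref{thm:baer-structure} is what makes the converse direction clean, while \autoref{prop:balanced-equivalent-coset-condition} reduces the forward direction to the single asymmetric observation above.
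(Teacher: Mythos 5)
Your proof is correct, and both directions hold up: the backward direction via \autoref{thm:baer-structure} (absorbing $Q$ into the odd torsion factor, so $G$ is Dedekind and the condition is vacuous) matches the paper, and in the forward direction your choices $H = H_0 \times \{1\}$ and $g = (g_0, y)$ do give $N_G(H) = N_P(H_0) \times Q$, $g \notin N_G(H)$, and $gH \cap Hg^{-1} = \emptyset$ because the $Q$-coordinates $y$ and $y^{-1}$ cannot agree for $y \neq 1$ of odd order; since $\tfrac{1}{2}|H| \geq 1$, condition (2) of \autoref{prop:balanced-equivalent-coset-condition} fails. The paper's forward direction rests on the same underlying mechanism --- an odd-order element $y$ prevents squares from landing in a subgroup of $P$ --- but packages it differently: it keeps $H \leq P$ non-normal with $g$ arbitrary, partitions the complement of $N_G(H)$ into cosets of $\langle y \rangle$, shows each such coset contains at most one element whose square lies in $H$, and concludes the proportion of such elements is at most $1/|y| < 1/2$, contradicting \autoref{lem:squares in H with probability 1/2}. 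Your version is a bit more economical, needing only one well-chosen coset $gH$ with an empty intersection rather than a global counting argument; the paper's version yields the slightly stronger quantitative statement that the proportion drops to at most $1/|y|$. Either route is a valid way to finish.
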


\begin{proof}
    It is vacuously true that Hamiltonian groups are weakly balanced. We prove the forward direction using the contrapositive. If $P$ is not Hamiltonian, we can find a non-normal subgroup $H \le P$. 
    
    Pick any non-identity element $y \in Q$. We know that $y \in N_G(H)$ since $y$ is in $Z(G)$, so we can divide the complement of $N_G(H)$ into cosets of $\langle y \rangle$. For any coset $x \langle y \rangle$, there can be at most one element whose square is in $H$. Indeed, if $(xy^a)^2 = x^2y^{2a}$ and $(xy^b)^2 = x^2y^{2b}$ are both in $H$, then so is $y^{2a-2b}$, which forces $y^{2a} = y^{2b}$, and hence $y^a = y^b$. However, this would mean that the proportion of elements outside $N_G(H)$ with squares in $H$ is at most $\tfrac{1}{|y|} < \tfrac{1}{2}$, which contradicts \autoref{lem:squares in H with probability 1/2}.
\end{proof}

\begin{lemma}\label{lem:Q8xCn is imbalanced}
    \(Q_8\times C_n\) is imbalanced for \(n \ge 3\).
\end{lemma}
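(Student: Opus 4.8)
The plan is to refute balance directly, by exhibiting a single explicit subset whose left and right quotient sets have different sizes, and to make the construction uniform in $n$ so that no case analysis in $n$ is needed. The witness I would use is the six-element set
\[
    A = \ps{\pp{1,1},\ \pp{i,1},\ \pp{i,t},\ \pp{j,t\inv},\ \pp{k,t\inv},\ \pp{k,1}} \subseteq Q_8 \times C_n,
\]
where in each pair the first entry is the $Q_8$-coordinate, the second is the $C_n$-coordinate, and $1$ in the second slot denotes $t^0$. The claim to prove is that $\pa{A\inv A} - \pa{AA\inv} = 2$ for every $n \ge 3$; as a sanity check one can verify by hand (or in SageMath) that for $n=3$ this gives $\pa{AA\inv} = 19$ and $\pa{A\inv A} = 21$.

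To verify this for general $n$ I would set up the following bookkeeping. Matching the element $xy\inv \in AA\inv$ arising from an ordered pair $(x,y)$ with the element $y\inv x \in A\inv A$, both have the \emph{same} $C_n$-component, while their $Q_8$-components are $xy\inv$ and $y\inv x$; these agree when $x,y$ commute and differ by the central factor $-1$ when they anti-commute (using, as in \autoref{prop:Hamil-2-nian}, that a non-central $z \in Q_8$ satisfies $z\inv = -z$). Decomposing both quotient sets by the image of the $Q_8$-coordinate in $Q_8/\ps{\pm 1}$ — that is, by whether it lies in $\ps{\pm 1}$, $\ps{\pm i}$, $\ps{\pm j}$, or $\ps{\pm k}$ — the central block is identical in $AA\inv$ and $A\inv A$, so the discrepancy is concentrated on the three non-central axes. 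On each axis the count reduces to recording which pairs $(\text{sign},\ C_n\text{-exponent})$ occur, and the anti-commuting sign flip turns the comparison into counting solutions, and their coordinate projections, of the three linear relations $\alpha \pm \beta \pm \gamma = 0$ among the $C_n$-exponents of the scalar, $i$-, $j$-, and $k$-elements of $A$. For the set above the only nontrivial coincidences come from the exponents $\ps{0,1,-1}$, and a short computation shows the three axes contribute $-1$, $0$, and $0$ to $\tfrac12(\pa{AA\inv}-\pa{A\inv A})$, yielding the net difference $2$ for all $n\ge 3$.

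The step I expect to be the real obstacle is not the computation but the \emph{choice} of $A$: the group $Q_8 \times C_n$ is very nearly balanced, and essentially every naive candidate turns out to satisfy $\pa{AA\inv}=\pa{A\inv A}$. The reason is that any single additive coincidence among exponents (a ``parallelogram'') produces equal and opposite axis-contributions that cancel, so that sets built from one such coincidence are balanced; the witness must be arranged so that the commuting and anti-commuting routes to a given axis-value fail to pair up across the three axes, which is exactly what the asymmetric multiplicities in the $i$- and $k$-coordinates of $A$ achieve. Once a correct $A$ is fixed, the remaining verification is a finite, elementary check of linear (non-)coincidences modulo $n$. As an alternative route one could instead invoke \autoref{lem:subquotients} to reduce from general $n$ to the prime cases $Q_8 \times C_p$ and to $Q_8 \times C_4$, but the uniform set above makes this reduction unnecessary.
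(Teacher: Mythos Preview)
Your approach is essentially the paper's: exhibit one explicit six-element witness and check the counts. The paper uses $A=\ps{(1,1),(i,1),(j,1),(-1,t),(-i,t\inv),(k,t\inv)}$ and simply records $17\neq 19$ for $n=3$ and $21\neq 23$ for $n\ge 4$; your alternative set also works and gives $19$ vs.\ $21$ for $n=3$ and $21$ vs.\ $23$ for $n\ge 4$. One small slip: your claimed per-axis contributions $(-1,0,0)$ are only correct at $n=3$; for $n\ge 4$ the $\pm i$, $\pm j$, $\pm k$ axes actually contribute $(-1,+1,-1)$ to $\tfrac12(\pa{AA\inv}-\pa{A\inv A})$, but the sum is still $-1$, so your stated net difference of $2$ is correct in every case.
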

\begin{proof}
    Consider the subset $A = \ps{(1,1),(i,1),(j,1),(-1,t),(-i,t\inv), (k,t\inv)}$.
    \begin{enumerate}
        \item For \(n=3\), \(|AA\inv| = 17 \neq 19 = |A\inv A|\).
        \item For \(n \ge 4\), \(|AA\inv| = 21 \neq 23 = |A\inv A|\).
    \end{enumerate}
    Thus, for $n \ge 3$, the group $Q_8 \times C_n$ is imbalanced.
\end{proof}

\begin{corollary}
    There are no balanced groups $G$ of type $(1)$ in \autoref{cor:three cases of weakly balanced groups}.
\end{corollary}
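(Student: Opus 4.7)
The plan is to argue by contradiction, reducing the general case of a balanced group $G = P \times Q$ (with $Q$ non-trivial) to the explicit imbalanced example provided by \autoref{lem:Q8xCn is imbalanced}. I would combine three earlier results: the structural constraint from \autoref{lem:Direct products are weakly balanced iff Hamiltonian}, the Baer classification \autoref{thm:baer-structure}, and the subquotient closure property \autoref{lem:subquotients}.

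Suppose for contradiction that $G = P \times Q$ is balanced with $Q$ non-trivial odd abelian. Since every balanced group is weakly balanced, \autoref{lem:Direct products are weakly balanced iff Hamiltonian} forces $P$ to be Hamiltonian. By \autoref{thm:baer-structure}, $P \cong Q_8 \times (C_2)^m \times R$ where $R$ is a torsion abelian group of odd order; but $P$ is a $2$-group, so $R$ is trivial and $P \cong Q_8 \times (C_2)^m$. In particular, $Q_8$ embeds into $P$ as a subgroup.

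Next, pick any odd prime $p$ dividing $|Q|$; since $Q$ is a non-trivial abelian group of odd order, such a $p$ exists and $Q$ contains an element of order $p$, giving a subgroup $C_p \le Q$. Then $Q_8 \times C_p$ is a subgroup of $G$. By \autoref{lem:subquotients}, $Q_8 \times C_p$ must be balanced. This directly contradicts \autoref{lem:Q8xCn is imbalanced}, which asserts imbalance for $Q_8 \times C_n$ whenever $n \ge 3$ (and $p \ge 3$ since $p$ is an odd prime), completing the contradiction.

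The only step that requires any care is making sure the reduction to $Q_8 \times C_p$ is literal enough to quote \autoref{lem:Q8xCn is imbalanced}; this is straightforward since $Q_8 \times (C_2)^m \times Q$ contains $Q_8 \times \{1\} \times C_p$ as a subgroup isomorphic to $Q_8 \times C_p$. No genuine obstacle arises: the heavy lifting has already been done by the explicit witness set in \autoref{lem:Q8xCn is imbalanced} and by the Hamiltonian structure theorem.
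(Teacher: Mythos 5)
Your proposal is correct and follows essentially the same route as the paper: use \autoref{lem:Direct products are weakly balanced iff Hamiltonian} to force $P$ Hamiltonian, apply \autoref{thm:baer-structure} to locate a $Q_8$ inside $P$, combine it with a cyclic subgroup of odd prime order in $Q$ to produce a subgroup $Q_8 \times C_p$ with $p \ge 3$, and contradict \autoref{lem:Q8xCn is imbalanced} via \autoref{lem:subquotients}. Your write-up is in fact slightly more careful than the paper's (which elides the choice of the odd cyclic factor), but there is no substantive difference.
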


\begin{proof}
    Assume for the sake of contradiction that $G$ in case $(1)$ is balanced. By \autoref{lem:Direct products are weakly balanced iff Hamiltonian}, $P$ must be Hamiltonian. By \autoref{thm:baer-structure}, $G$ has a subgroup of the form $Q_8 \times C_n$ must be abelian for some odd $n > 1$. However, such a subgroup would be imbalanced by \autoref{lem:Q8xCn is imbalanced}, which contradicts \autoref{lem:subquotients}.
\end{proof}

\subsection{Sign Semidirect Products}

Next, we consider the case where $G = P \ltimes Q$ is a balanced sign semidirect product. Let $\Phi: P \to \Aut(Q)$ be the conjugation action map with image equal to $\{\pm 1\}$ and let $K = \ker \Phi$.

\begin{lemma}\label{lem:dihedral-imbalanced}
    \(D_{2n}\) is imbalanced for \(n \ge 6\).
\end{lemma}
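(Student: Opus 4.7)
The plan is to exhibit, for each $n \ge 6$, an explicit subset $A \subseteq D_{2n}$ witnessing $|AA\inv| \ne |A\inv A|$. The natural parametrization writes
\[
A = \{r^\rho : \rho \in R\} \cup \{r^\lambda s : \lambda \in L\}
\]
for subsets $R, L \subseteq \Z/n\Z$. Using only the relation $sr = r\inv s$, a short case split on rotation versus reflection types shows that the rotation components of $AA\inv$ and $A\inv A$ coincide (both equal $(R-R) \cup (L-L)$), while the reflection components are $\{r^{\rho+\lambda}s : \rho \in R,\ \lambda \in L\}$ in $AA\inv$ and $\{r^{\lambda-\rho}s : \rho \in R,\ \lambda \in L\}$ in $A\inv A$. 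Consequently $|AA\inv| - |A\inv A| = |R + L| - |L - R|$, and the lemma reduces to finding $R, L \subseteq \Z/n\Z$ with different sumset and difference-set cardinalities.

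For this I would take the classical ``more differences than sums'' set $R = L = \{0, 1, 3\}$. Over the integers, $|R + R| = 6$ while $|R - R| = 7$. I would then verify that this strict inequality persists after reduction modulo $n$ for every $n \ge 6$: for $n \ge 7$ no residue classes collide and the sizes remain $6$ and $7$; for $n = 6$ the two collisions $6 \equiv 0$ and $-3 \equiv 3$ degrade them to $5$ and $6$, respectively. Either way $|R + L| \ne |L - R|$, producing the desired $A$, for instance $A = \{1, r, r^3, s, rs, r^3 s\}$.

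The principal challenge is setting up the dictionary between the multiplicative question in $D_{2n}$ and the additive sumset/difference-set question in $\Z/n\Z$; once that dictionary is in place the lemma becomes a single explicit computation. A secondary point is checking that the chosen set $\{0, 1, 3\}$ really works uniformly for all $n \ge 6$, which it does --- and it provably fails for $n \le 5$, consistent with $D_6$ and $D_{10}$ being balanced.
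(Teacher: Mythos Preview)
Your proof is correct. The dictionary you set up is accurate: with $A=\{r^\rho:\rho\in R\}\cup\{r^\lambda s:\lambda\in L\}$ one indeed gets rotation part $(R-R)\cup(L-L)$ in both $AA\inv$ and $A\inv A$, and reflection parts $R+L$ and $L-R$ respectively, so $|AA\inv|-|A\inv A|=|R+L|-|L-R|$. The choice $R=L=\{0,1,3\}$ then gives $6$ versus $7$ for $n\ge 7$ and $5$ versus $6$ for $n=6$, exactly as you claim.

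The paper takes the same underlying approach---exhibit a single six-element witness with three rotations and three reflections---but presents it as a bare computation with the set $A=\{1,r,r^{-2},s,sr^{-1},sr^2\}$ (which in your notation is $R=L=\{0,1,-2\}$) and simply lists the resulting cardinalities in three cases. Your version is more conceptual: by making the reduction to $|R+L|$ versus $|L-R|$ in $\Z/n\Z$ explicit, you explain \emph{why} such a set works and connect the lemma to the classical sum-versus-difference phenomenon in additive combinatorics. This also makes transparent why $n\ge 6$ is the right threshold, since any three-element integer set has $|R+R|\le 6$ and $|R-R|\le 7$, forcing collisions modulo $n\le 5$.
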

\begin{proof}
    Consider the subset \(A = \ps{1, r, r^{-2}, s, sr^{-1}, sr^2}\).
    \begin{enumerate}
        \item For $n = 6$, \(|AA\inv| = 11 \neq 12 = |A\inv A|\).
        \item For $n = 7$, \(|AA\inv| = 13 \neq 14 = |A\inv A|\).
        \item For $n \ge 8$, \(|AA\inv| = 13 \neq 14 = |A\inv A|\).
    \end{enumerate}
    Thus, for $n \ge 6$, the group $D_{2n}$ is imbalanced.
\end{proof}

\begin{lemma}\label{lem:sign-semidirect-imbalanced-C2:(Cp)^2}
    The sign semidirect products $C_2 \ltimes (C_3)^2$ and 
    \(C_2 \ltimes (C_5)^2\) are imbalanced. 
\end{lemma}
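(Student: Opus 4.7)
The plan is to exhibit explicit witness subsets $A \subseteq G$ for each of the two groups and directly verify $|AA\inv| \neq |A\inv A|$, in the same spirit as \autoref{lem:Q8xCn is imbalanced} and \autoref{lem:dihedral-imbalanced}. Fix $p \in \{3,5\}$ and write
\[
    G = \pv{s, a, b \mid s^2 = a^p = b^p = 1,\ [a,b] = 1,\ sas\inv = a\inv,\ sbs\inv = b\inv},
\]
so that every element has the unique normal form $s^\epsilon a^i b^j$ with $\epsilon \in \{0,1\}$ and $0 \le i,j < p$. In particular, $G$ is the generalized dihedral group on $(C_p)^2$, every element of the form $sa^ib^j$ is an involution, and $|G| \in \{18, 50\}$.

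Next, I would search for small witnesses using SageMath, guided by \autoref{prop:balanced-equivalent-coset-condition}: sets of the form $H \cup gH$ with $H$ a non-normal subgroup of order $2$ (e.g.\ $H = \pv{s}$ or $H = \pv{sa^ib^j}$) and $g \notin N_G(H)$ are natural candidates, since by that proposition a balanced such $A$ forces a tight coset-intersection condition which is generically violated. If no set with $|A| \le 6$ of this exact shape works, the space of small perturbations (adding or swapping one or two elements) can be swept exhaustively at this scale.

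Once candidate sets $A_3 \subseteq C_2 \ltimes (C_3)^2$ and $A_5 \subseteq C_2 \ltimes (C_5)^2$ have been identified, the verification is purely mechanical: enumerate $AA\inv$ and $A\inv A$ using the normal form above and compare cardinalities, which can be written out in a short table or confirmed by one line of SageMath. The main obstacle is conceptual rather than technical: there is no obvious uniform construction that produces imbalance across the entire family of sign semidirect products of the form $C_2 \ltimes (C_p)^m$, so the proof is inherently case-by-case and computational. The saving grace is that the groups here are small enough that exhaustive search over subsets up to the natural symmetries of $G$ (generated by conjugation and the swap $a \leftrightarrow b$) completes almost instantly.
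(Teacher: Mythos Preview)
Your proposal is a plan, not a proof: you never exhibit the witness sets $A_3$ and $A_5$, only describe how you would look for them. The paper's proof, by contrast, writes down a single explicit six-element set
\[
A = \{u^2v,\ tu^2,\ uv,\ v^2,\ tv,\ tuv\}
\]
(in the presentation $\langle t,u,v \mid t^2=u^n=v^n=1,\ tu=u^{-1}t,\ tv=v^{-1}t\rangle$) that works uniformly for both $n=3$ and $n=5$, with $|AA^{-1}|=13\neq 14=|A^{-1}A|$. Producing such a set is the entire content of the lemma; without it there is nothing to verify.

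There is also a specific obstruction to your suggested starting point. Sets of the form $H\cup gH$ with $H=\langle s\rangle$ (or any order-$2$ subgroup) will \emph{never} witness imbalance here: the generalized dihedral groups $C_2\ltimes Q$ with $Q$ odd abelian are weakly balanced, as the paper itself notes in Section~\ref{sec:Further-Questions}. Concretely, one checks that for every $g\notin N_G(H)$ the conditions $|H\cap gHg^{-1}|=|gH\cap Hg^{-1}|=1$ of \autoref{prop:balanced-equivalent-coset-condition} are automatically satisfied. So your search would necessarily have to move past the ``$H\cup gH$'' heuristic to the perturbation stage before finding anything, and the proposal gives no indication of what the eventual witness looks like or why one exists.
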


\begin{proof}
    In $C_2 \ltimes (C_n)^2 = \langle t, u, v \mid t^2 = u^n = v^n = 1, tu = u^{-1}t, tv = v^{-1}t\rangle$, consider the subset $A = \{u^2v, tu^2, uv, v^2, tv, tuv\}$. One can check that for both $n = 3$ and $n=5$, $|AA^{-1}| = 13 \neq 14 = |A^{-1}A|$. 
\end{proof}

\begin{proposition}\label{prop:sign-semidirect-Q-C3-or-C5}
    If $G = P \ltimes Q$ is a balanced sign semidirect product, then $Q \in \{C_3, C_5\}$.
\end{proposition}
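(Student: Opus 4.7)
The plan is to show that $G$ has a quotient isomorphic to the generalized dihedral group $C_2 \ltimes Q$ (with $C_2$ acting by inversion), apply \autoref{lem:subquotients} to force this quotient to be balanced, and then use \autoref{lem:dihedral-imbalanced} and \autoref{lem:sign-semidirect-imbalanced-C2:(Cp)^2} to pin down the isomorphism type of $Q$.

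First, I would verify that $K = \ker \Phi$ is normal in $G$. By definition, $K$ is normal in $P$, and since every element of $K$ acts trivially on $Q$, the subgroup $K$ is centralized by $Q$. Together with $G = PQ$ this gives $K \triangleleft G$. Because $K \le P$ we have $K \cap Q = 1$, and $P/K \cong C_2$ since $\Phi$ has image $\{\pm 1\}$; therefore $G/K \cong C_2 \ltimes Q$ with the nontrivial element of $C_2$ acting on $Q$ by inversion. By \autoref{lem:subquotients}, this quotient, and all of its subgroups, must be balanced.

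Next, I would exploit the subgroup structure of $C_2 \ltimes Q$. For every $y \in Q$ of order $n$, the subgroup $C_2 \ltimes \langle y\rangle$ is isomorphic to $D_{2n}$, so by \autoref{lem:dihedral-imbalanced} we must have $n \le 5$. Since $Q$ is abelian, an element of order $3$ and an element of order $5$ would multiply to an element of order $15$, contradicting $n \le 5$; hence $Q$ is either a $3$-group or a $5$-group, and in either case its exponent is $\le 5$. Thus $Q \cong (C_3)^a$ or $Q \cong (C_5)^b$. Applying \autoref{lem:sign-semidirect-imbalanced-C2:(Cp)^2} to the subgroup $C_2 \ltimes (C_p)^2 \le C_2 \ltimes Q$ rules out $a \ge 2$ and $b \ge 2$, and since $Q$ is non-trivial by the definition of a sign semidirect product, we conclude $Q \cong C_3$ or $Q \cong C_5$.

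The main conceptual step is the passage to the generalized dihedral quotient $G/K$: once one observes that $K$ is normal in all of $G$ (not just in $P$) and produces this quotient, the rest is a routine pigeonhole argument on the exponent and rank of the abelian group $Q$ using the pre-existing imbalanced examples. I do not anticipate any serious technical obstacle beyond being careful about why $K$ is centralized by $Q$, which follows immediately from $K$ acting trivially on $Q$.
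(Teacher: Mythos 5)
Your proof is correct and follows essentially the same route as the paper: pass to the quotient $C_2 \ltimes Q$, use the dihedral subgroups $C_2 \ltimes \langle y\rangle \cong D_{2n}$ to bound element orders via \autoref{lem:dihedral-imbalanced}, rule out mixed orders via the order-$15$ element, and kill rank $\ge 2$ with \autoref{lem:sign-semidirect-imbalanced-C2:(Cp)^2}. Your verification that $K$ is normal in all of $G$ and that the kernel of $G \twoheadrightarrow C_2 \ltimes Q$ is exactly $K$ is in fact more careful than the paper's phrasing, which misstates the kernel as $K \times Q$.
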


\begin{proof}
    Let \(G = P\ltimes Q\) be a balanced sign semidirect product. The sign semidirect product \(\tilde{G} = C_2 \ltimes Q\) is a quotient of $G$, where the kernel of $G \twoheadrightarrow \tilde{G}$ is given by $K \times Q$, and $\tilde{G}$ is balanced by \autoref{lem:subquotients}. Now, suppose $Q$ has a non-identity element of order $m$. Then, $\tilde{G}$ has a subgroup of the form \(C_2 \ltimes C_m\), which is also balanced by \autoref{lem:subquotients}. But \(C_2\ltimes C_m = D_{2m}\), so by \autoref{lem:dihedral-imbalanced}, we must have \(m\in \ps{3,5}\). 
    
    Furthermore, $Q$ cannot have both elements of order $3$ and $5$, since it would then have elements of order $15$. This means that $Q = (C_3)^n$ or $Q = (C_5)^n$. We cannot have $n \ge 2$ since we would then have a subgroup of the form $C_2 \ltimes (C_3)^2$ or $C_2 \ltimes (C_5)^2$, which must be imbalanced by \autoref{lem:sign-semidirect-imbalanced-C2:(Cp)^2}, so $Q = C_3$ or $Q = C_5$.
\end{proof}

\begin{proposition}\label{prop:sign-semidirect-P-cyclic}
    If $G = P \ltimes Q$ is a balanced sign semidirect product, then $P$ is cyclic.
\end{proposition}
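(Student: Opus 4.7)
The plan is to argue by contrapositive: assuming $P$ is non-cyclic, I will exhibit a quotient of $G$ isomorphic to $D_{4m}$ for some $m \in \{3,5\}$, which is imbalanced by \autoref{lem:dihedral-imbalanced}, contradicting \autoref{lem:subquotients} applied to the balanced group $G$. By \autoref{prop:sign-semidirect-Q-C3-or-C5} we may write $Q = C_m$ with $m \in \{3,5\}$, and let $K = \ker \Phi \le P$, a subgroup of index $2$.

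A non-cyclic $2$-group has at least two maximal subgroups, since a $p$-group is cyclic if and only if it has a unique maximal subgroup. So I choose $L \ne K$ of index $2$ in $P$ and set $N = K \cap L$. Both $K$ and $L$ are normal in $P$ (being of index $2$), hence so is $N$, and comparing orders the natural injection $P/N \hookrightarrow P/K \times P/L$ is an isomorphism, giving $P/N \cong (C_2)^2$. The key additional observation is that $N$ is normal in all of $G$, not merely in $P$: since $N \subseteq K = \ker\Phi$, every $y \in Q$ centralizes every $x \in N$ inside $G = P \ltimes Q$, and combining this with normality of $N$ in $P$ yields normality of $N$ in $G$. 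I may therefore form the quotient $G/N \cong (P/N) \ltimes Q \cong (C_2)^2 \ltimes C_m$ with the induced (still sign) action.

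One $C_2$ factor of $P/N$ acts trivially on $Q$ and the other inverts it, so $G/N \cong C_2 \times D_{2m}$. Because $m$ is odd, the classical isomorphism $C_2 \times D_{2m} \cong D_{4m}$ applies: the central involution of $C_2$ combines with a rotation of order $m$ in $D_{2m}$ to produce a rotation of order $2m$ generating the cyclic part of $D_{4m}$. Since $4m \in \{12,20\}$, \autoref{lem:dihedral-imbalanced} forces $D_{4m}$, and therefore $G$ itself via \autoref{lem:subquotients}, to be imbalanced, the desired contradiction. The main technical obstacle is verifying that $N$ is normal in $G$ rather than just in $P$, since $P$ itself is typically not normal in the semidirect product $G = P \ltimes Q$; everything else reduces to standard $p$-group facts and the well-known dihedral decomposition.
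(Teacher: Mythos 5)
Your proof is correct, and it takes a more direct route than the paper's. The paper argues by taking $P$ of minimal order among non-cyclic counterexamples, quotients by a central involution in $K$ to conclude (via minimality and the $P/Z(P)$ cyclic criterion) that $P$ must be abelian of the form $C_2 \times C_{2^n}$, and only then passes to the quotient $(C_2 \times C_2) \ltimes Q$. You bypass the minimality induction entirely: since $K = \ker\Phi$ has index $2$ and a non-cyclic $2$-group has more than one maximal subgroup, you pick a second maximal subgroup $L \neq K$ and quotient by $N = K \cap L$ directly, landing on $(C_2)^2 \ltimes Q \cong C_2 \times D_{2m} \cong D_{4m}$ in one step. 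Your attention to the point that $N$ is normal in all of $G$ (because $N \le K$ is centralized by $Q$ and normal in $P$) is exactly the right technical check, and it is the same fact the paper implicitly relies on when it says $\Phi$ factors through the quotient. Both arguments terminate at the same imbalanced groups $D_{12}$ and $D_{20}$ via \autoref{lem:dihedral-imbalanced} and \autoref{lem:subquotients}; yours is shorter and avoids the intermediate classification of $P$ as $C_2 \times C_{2^n}$, at the cost of invoking the Frattini-type fact that a non-cyclic $p$-group has at least two maximal subgroups (which the paper's minimality argument effectively reproves in a special case).
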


\begin{proof}
    Assume for the sake of contradiction that $P$ is non-cyclic. We may further assume that $P$ has minimal order among non-cyclic examples. Since $K \le P$ is a non-trivial normal subgroup, $L = K \cap Z(P)$ is also non-trivial, so we can find an element $x \in L$ of order $2$. Let $\tilde{P} = P/\langle x \rangle$; the map $\Phi$ factors through $\tilde{P}$, so the sign semidirect product $\tilde{G} = \tilde{P} \ltimes Q$ (given by the induced map $\tilde{P} \to \Aut(Q)$) is a quotient of $G$. By \autoref{lem:subquotients}, we know that $\tilde{G}$ is also balanced.
    
    By the minimality of $P$, we have $\tilde{P} = C_{2^n}$ for some positive integer $n$. Since $P/Z(P)$ is a quotient of $\tilde{P}$, it must also be cyclic, implying that $P$ is abelian. This means that the only non-cyclic choice for $P$ is $C_2 \times C_{2^n}$. Now, consider the subgroup $H$ of index $2$ inside $C_{2^n}$: its elements are precisely the squares in $P$, and hence, it is a normal subgroup contained in $K$. This means that the sign semidirect product $P/H \ltimes Q = (C_2 \times C_2) \ltimes Q$ must be balanced by \autoref{lem:subquotients}.

    However, we know by \autoref{prop:sign-semidirect-Q-C3-or-C5} that $Q \in \{C_3, C_5\}$, and the sign semidirect products $(C_2 \times C_2) \ltimes C_3$ and $(C_2 \times C_2) \ltimes C_5$ are simply the groups $D_{12} = C_2 \times D_6$ and $D_{20} = C_2 \times D_{10}$ respectively. These are imbalanced by \autoref{lem:dihedral-imbalanced}, so we have the required contradiction.
\end{proof}

\begin{lemma}\label{lem:sign-semidirect-imbalanced-C8:Cp}
    The sign semidirect products $C_8 \ltimes C_3$ and 
    \(C_8 \ltimes C_5\) are imbalanced. 
\end{lemma}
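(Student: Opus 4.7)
The plan is to imitate the strategy of \autoref{lem:dihedral-imbalanced} and \autoref{lem:sign-semidirect-imbalanced-C2:(Cp)^2}: exhibit a single concrete subset $A$ (or one for each prime) in $C_8 \ltimes C_p$ and check directly that $|AA^{-1}|\neq |A^{-1}A|$. First I would fix a presentation
\[
    C_8 \ltimes C_p = \pv{t, u \mid t^8 = u^p = 1,\ tut\inv = u\inv}
\]
for $p \in \{3,5\}$, so that every element has the canonical form $t^i u^j$ with $0 \le i < 8$ and $0 \le j < p$, and the multiplication rule $(t^i u^j)(t^k u^\ell) = t^{i+k} u^{(-1)^k j + \ell}$ and inversion rule $(t^iu^j)\inv = t^{-i} u^{(-1)^{i+1} j}$ make the computations of $AA\inv$ and $A\inv A$ fully mechanical.

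Next, I would search for a small witness set $A$ that mixes elements from different $\pv{t}$-cosets of $Q$, since pure subsets of $\pv{t}$ or $Q$ yield abelian subgroups where the quotient sets automatically coincide. Guided by the analogous witnesses in \autoref{lem:dihedral-imbalanced} and \autoref{lem:sign-semidirect-imbalanced-C2:(Cp)^2}, a natural first attempt is a six-element set of the form
\[
    A = \ps{1,\ t^a u^{j_1},\ t^b u^{j_2},\ t^c u^{j_3},\ t^d u^{j_4},\ t^e u^{j_5}},
\]
where the exponents of $t$ are chosen to produce a few repetitions in $AA\inv$ (elements that commute with each other, or cancel to powers of $t$ alone) but whose inversion swaps the sign of the $u$-exponents in a way that breaks those coincidences in $A\inv A$. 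The same $A$ is likely to work for both $p = 3$ and $p = 5$, mirroring the uniform choice made in \autoref{lem:sign-semidirect-imbalanced-C2:(Cp)^2}; if not, I would simply list two separate witnesses.

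Finally, I would verify the claim by listing all $36$ products $a_i a_j\inv$ and $a_i\inv a_j$, checking that the resulting cardinalities differ by at least one. This is a finite and easily automated check (the paper already acknowledges the use of SageMath in Section \ref{sec:Classification-of-Balanced-Groups}), so once a candidate $A$ is in hand the verification is immediate. The only real obstacle is finding a compact witness $A$ that works simultaneously for $p = 3$ and $p = 5$; a brute-force search over small subsets of $C_8 \ltimes C_p$ guarantees success, after which the lemma reduces to the numerical statement that $|AA\inv|$ and $|A\inv A|$ disagree.
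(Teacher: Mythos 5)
Your strategy is exactly the paper's: fix the presentation $\langle t, u \mid t^8 = u^p = 1,\ tut^{-1} = u^{-1}\rangle$, exhibit a small witness set $A$, and check $|AA^{-1}| \neq |A^{-1}A|$ mechanically. Your normal form, multiplication rule, and inversion rule are all correct, and your guess about the shape of the witness (a six-element set mixing several $\langle t\rangle$-cosets, with a single choice working for both $p=3$ and $p=5$) turns out to be exactly right.

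However, the proposal stops precisely at the point where the mathematical content of the lemma lives: you never actually produce the witness set, and for an existence-of-a-counterexample lemma the witness \emph{is} the proof. Saying that a brute-force search ``guarantees success'' presupposes the conclusion --- a priori the search could fail, in which case the group might be balanced. The paper supplies the concrete set $A = \{1,\ t^3,\ tw,\ t^6w,\ t^2w^2,\ t^4w^2\}$ (in its notation $w$ for the generator of $C_p$), which gives $|AA^{-1}| = 17 \neq 19 = |A^{-1}A|$ for $p=3$ and $|AA^{-1}| = 23 \neq 25 = |A^{-1}A|$ for $p=5$. To complete your argument you would need to carry out the search you describe and record the resulting set together with the two cardinality computations; everything else in your write-up is correct scaffolding around that missing step.
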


\begin{proof}
    In the group $C_8 \ltimes C_n = \langle t, w \mid t^8 = w^n = 1, tw = w^{-1}t\rangle$, consider the subset $A = \{1, t^3, tw, t^6w, t^2w^2, t^4w^2\}$.
    \begin{enumerate}
        \item For $n = 3$, $|AA^{-1}| = 17 \neq 19=  |A^{-1}A|$.
        \item For $n = 5$, $|AA^{-1}| = 23 \neq 25= |A^{-1}A|$. \qedhere
    \end{enumerate} 
\end{proof}

\begin{corollary}
    The only balanced sign semidirect products are $D_6$, $D_{10}$, $Q_{12}$, and $Q_{20}$.
\end{corollary}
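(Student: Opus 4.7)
The plan is to combine \autoref{prop:sign-semidirect-Q-C3-or-C5} and \autoref{prop:sign-semidirect-P-cyclic} to drastically reduce the possibilities for $P$ and $Q$, then use \autoref{lem:sign-semidirect-imbalanced-C8:Cp} together with \autoref{lem:subquotients} to bound $|P|$, and finally identify the remaining small-order candidates and verify them directly.

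By the two propositions just cited, any balanced sign semidirect product $G = P \ltimes Q$ must satisfy $Q \in \{C_3, C_5\}$ and $P = C_{2^n}$ for some $n \ge 1$ (with $n \ge 1$ because $G$ is non-abelian, so $\Phi$ is nontrivial). In this situation $K = \ker \Phi$ is the unique subgroup of $P$ of index $2$. To show $n \le 2$, suppose toward contradiction that $n \ge 3$, and let $H \le K$ be the unique subgroup of order $2^{n-3}$. Since $H$ is characteristic in the cyclic group $P$, it is normal in $P$; and since $H \subseteq K$ centralizes $Q$, it is in fact normal in $G$. The induced action $P/H \to \{\pm 1\}$ is still surjective, so the quotient $G/H$ is a sign semidirect product of $C_8 = P/H$ with $Q$. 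By \autoref{lem:sign-semidirect-imbalanced-C8:Cp}, this quotient is imbalanced, which contradicts \autoref{lem:subquotients}. Hence $n \in \{1, 2\}$.

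This leaves four candidate groups. For $n = 1$, the sign semidirect products $C_2 \ltimes C_3$ and $C_2 \ltimes C_5$ are by definition $D_6$ and $D_{10}$. For $n = 2$, a direct check (for instance, by producing an element of order $2m$ whose $m$th power equals the square of a lift of a generator of $C_4$, matching the dicyclic presentation) identifies $C_4 \ltimes C_3 \cong Q_{12}$ and $C_4 \ltimes C_5 \cong Q_{20}$. It then remains to verify that each of $D_6$, $D_{10}$, $Q_{12}$, $Q_{20}$ is in fact balanced. This is the main obstacle, since the preceding arguments only give necessary conditions; however, because all four groups have order at most $20$, a brute-force enumeration over all subsets in SageMath is feasible and comprises the computational step alluded to in the introduction.
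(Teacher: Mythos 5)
Your proposal is correct and follows essentially the same route as the paper: reduce to $C_{2^n}\ltimes Q$ with $Q\in\{C_3,C_5\}$ via \autoref{prop:sign-semidirect-Q-C3-or-C5} and \autoref{prop:sign-semidirect-P-cyclic}, kill $n\ge 3$ by passing to the $C_8\ltimes Q$ quotient and invoking \autoref{lem:sign-semidirect-imbalanced-C8:Cp} with \autoref{lem:subquotients}, and identify the $n=1,2$ cases as $D_6,D_{10},Q_{12},Q_{20}$. Your closing remark that the balancedness of these four groups still needs independent (computational) verification is a fair observation about the overall classification, but it is not part of what this corollary's proof establishes in the paper, which only argues the ``only'' direction.
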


\begin{proof}
    By \autoref{prop:sign-semidirect-Q-C3-or-C5} and 
    \autoref{prop:sign-semidirect-P-cyclic}, all 
    balanced sign semidirect products are of the form 
    \(C_{2^n} \ltimes Q\), where \(Q\in \ps{C_3,C_5}\). 
    \begin{enumerate}
        \item If \(n=1\), we get \(D_6\) and \(D_{10}\) for \(Q=C_3\) and \(Q=C_5\), respectively. 
        \item If \(n=2\), we get \(Q_{12}\) and \(Q_{20}\) for \(Q=C_3\) and \(Q=C_5\), respectively.
        \item If \(n\ge 3\), then the index $8$ subgroup of $C_{2^n}$ is contained in $K$, so there exists a quotient of $C_{2^n} \ltimes Q$ isomorphic to \(C_8\ltimes Q\). However, this is imbalanced by \autoref{lem:sign-semidirect-imbalanced-C8:Cp}, so $C_{2^n} \ltimes Q$ is imbalanced by \autoref{lem:subquotients}.  \qedhere
    \end{enumerate}
\end{proof}

\subsection{Non-abelian 2-groups}
Let $P$ be a non-abelian $2$-group.

\begin{lemma}\label{lem:balanced-2-groups-of-small-order}
    The non-abelian balanced $2$-groups of orders $\leq 32$ are precisely:
    \begin{enumerate}
        \item Order 8: $Q_8$ and $D_8$.
        \item Order 16: $Q_8 \times C_2$, $Q_{16}$, and $C_4 \ltimes C_4$.
        \item Order 32: $Q_8 \times (C_2)^2$.
    \end{enumerate}
\end{lemma}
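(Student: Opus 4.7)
The plan is to reduce the lemma to a finite computational verification in SageMath, supported by the structural results already established. Using the SmallGroups library, I would enumerate the non-abelian $2$-groups of orders $8$, $16$, and $32$: there are $2$, $9$, and $44$ of them respectively, so $55$ candidates in total must be classified as balanced or not.

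For the six groups claimed to be balanced, the argument splits in two. The Hamil-2-nian groups $Q_8$, $Q_8 \times C_2$, and $Q_8 \times (C_2)^2$ are balanced by \autoref{prop:Hamil-2-nian}. The remaining three, $D_8$, $Q_{16}$, and $C_4 \ltimes C_4$, have orders at most $16$, so balancedness can be confirmed by an exhaustive enumeration of subsets in SageMath --- at most $2^{16}$ subsets per group --- checking $|AA^{-1}| = |A^{-1}A|$ for each. Substantial pruning is available by quotienting out the natural symmetries (left and right translation, inversion, and complementation), but even a naive enumeration is tractable at these orders.

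For the remaining $49$ non-abelian $2$-groups, I need to exhibit imbalance. The most efficient route is via \autoref{lem:subquotients}: any group having a subquotient isomorphic to an already-known imbalanced group (such as $D_{16}$ or $D_{32}$, imbalanced by \autoref{lem:dihedral-imbalanced}) is itself imbalanced, which handles a substantial fraction of the cases. For the rest, I would locate an explicit witness subset $A$ with $|AA^{-1}| \neq |A^{-1}A|$ by randomized or bounded-size search in SageMath and record it in the proof.

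The hard part is order $32$: exhaustive enumeration over $2^{32}$ subsets is infeasible, so balancedness there cannot be established by brute force. It is therefore essential that the sole balanced example of order $32$ is the Hamil-2-nian group $Q_8 \times (C_2)^2$, covered structurally by \autoref{prop:Hamil-2-nian}. The main practical challenge is then to find short witnesses of imbalance uniformly across all $43$ imbalanced order-$32$ groups; empirically one expects witnesses of size at most $7$ or so to exist for each, which keeps the search feasible, but verifying this across every group is where the bulk of the computational effort lies.
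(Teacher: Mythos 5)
Your proposal is correct and matches the paper's proof in essentially every respect: brute-force subset enumeration for orders $8$ and $16$, elimination of order-$32$ groups via imbalanced subquotients together with randomized witness searches for the survivors, and reliance on \autoref{prop:Hamil-2-nian} to certify that $Q_8 \times (C_2)^2$ is balanced since exhaustive enumeration at order $32$ is infeasible. No substantive differences to report.
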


\begin{proof}
    This follows from a computation in Sage, which we defer to 
    \autoref{appendix:groups of order 8 16 and 32 computation}. 
\end{proof}

\begin{lemma}\label{D8-subquotient}
    Any non-Dedekind balanced $2$-group $P$ has a subquotient equal to $D_8$.
\end{lemma}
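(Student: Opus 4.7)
The plan is to use the minimal non-normal subgroup of $P$ together with Proposition 3.2 to reduce to a small local structure, and then extract $D_8$ directly from it.

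First, I would pick a non-normal subgroup $H \le P$ of minimum order and any $g \in P \setminus N_P(H)$. Setting $K = H \cap gHg^{-1}$, Proposition 3.2 forces $[H:K] = 2$, and the minimality of $|H|$ makes $K$ normal in $P$. Passing to $\bar P = P/K$, which is balanced by Lemma 4.1, gives a non-normal subgroup $\bar H = H/K$ of order $2$; let $\bar h$ generate it, $\bar g$ be the image of $g$, and $\bar h' = \bar g \bar h \bar g^{-1} \neq \bar h$. Then $\bar V := \langle \bar h, \bar h'\rangle$ is dihedral of order $2^n$ for some $n \ge 2$ and is balanced as a subgroup of $\bar P$. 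By Lemma \ref{lem:dihedral-imbalanced}, this forces $n \in \{2, 3\}$. The case $n = 3$ immediately yields $\bar V \cong D_8$, so I may assume $\bar V \cong C_2 \times C_2$.

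For the Klein-four case, observe that conjugation by $\bar g$ acts on $\bar V$ as an order-$2$ automorphism (since $\Aut(\bar V) \cong S_3$ and $\bar g$ has $2$-power order), hence as the swap of $\bar h$ and $\bar h'$; in particular $\bar g^2 \notin \{\bar h, \bar h'\}$. Setting $M := \langle \bar h, \bar g\rangle \le \bar P$, which contains $\bar V$ as a normal subgroup, I would apply Proposition \ref{prop:balanced-equivalent-coset-condition} inside the weakly balanced group $M$ to the subgroup $\bar H$ and element $\bar g$. Enumerating the four possible coincidences between the elements of $\bar g \bar H = \{\bar g, \bar g \bar h\}$ and $\bar H \bar g^{-1} = \{\bar g^{-1}, \bar h \bar g^{-1}\}$, the condition $|\bar g \bar H \cap \bar H \bar g^{-1}| = 1$ translates to $\bar g^2 \in \{1, \bar h, \bar h \bar h'\}$. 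Combining with the exclusion $\bar g^2 \neq \bar h$ from the previous paragraph, this leaves $\bar g^2 \in \{1, \bar h \bar h'\}$, so in either case $\bar g^4 = 1$.

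Hence $|M| = |\bar V| \cdot |\langle \bar g\rangle|/|\bar V \cap \langle \bar g\rangle| = 8$ in both subcases, so $M$ is a non-abelian group of order $8$ containing the Klein-four subgroup $\bar V$; since $Q_8$ has a unique involution, this forces $M \cong D_8$, which is the desired subquotient of $P$. The main obstacle is the bookkeeping in the Klein-four case: one must enumerate the coset-intersection conditions from Proposition \ref{prop:balanced-equivalent-coset-condition} carefully enough to pin down $\bar g^2$. Once that is done, the structural identification $M \cong D_8$ follows immediately from the classification of groups of order $8$.
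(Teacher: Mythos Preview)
Your argument is correct and follows the same reduction as the paper: take a minimal non-normal $H$, use Proposition~\ref{prop:balanced-equivalent-coset-condition} to see that $K=H\cap gHg^{-1}$ has index $2$ and is normal, and pass to $\bar P=P/K$ where $\bar H$ has order $2$. The only difference is in the endgame: the paper applies Lemma~\ref{lem:squares in H with probability 1/2} to the coset $\bar g\bar H$ to produce an involution there (either $\bar g$ or $\bar g\bar h$), so that $\langle \bar g,\bar h\rangle$ is generated by two non-commuting involutions and is therefore a non-abelian dihedral $2$-group, forced to be $D_8$ by Lemma~\ref{lem:dihedral-imbalanced}---this bypasses your case split on $|\langle \bar h,\bar h'\rangle|$ and the Klein-four bookkeeping, which in effect re-derives the same conclusion $\bar g^2\in\{1,\bar h\bar h'\}$ by hand from Proposition~\ref{prop:balanced-equivalent-coset-condition}.
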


\begin{proof}
    Let $P$ be a non-Dedekind balanced $2$-group. Let $H$ be a minimal non-normal subgroup of $P$ and pick $g \not\in N_G(H)$. By \autoref{prop:balanced-equivalent-coset-condition}, $K = H \cap gHg^{-1}$ is an index $2$ subgroup of $H$, and by the minimality of $H$, $K$ is normal in $P$. By \autoref{lem:subquotients}, the quotient $\tilde{P} = P/K$ is balanced with non-normal subgroup $\tilde{H} = H/K$ of order $2$. It suffices to show that $D_8$ is a subgroup of $\tilde{P}$, so we may replace $P$ and $H$ with $\tilde{P}$ and $\tilde{H}$, respectively, and assume $H = \langle h \rangle$ has order $2$.

    As before, with $g \not\in N_G(H)$, we have one of $g^2 = 1$ or $(gh)^2 = 1$ by \autoref{lem:squares in H with probability 1/2}. In either case, that the subgroup generated by $g$ and $h$ is a non-abelian dihedral group. By \autoref{lem:dihedral-imbalanced} and \autoref{lem:subquotients}, this subgroup must be $D_8$.
\end{proof}

\begin{proposition}
    There are no non-Dedekind balanced $2$-groups of order $\geq 32$.
\end{proposition}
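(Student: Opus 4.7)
The plan is to proceed by minimal counterexample: suppose $P$ is a non-Dedekind balanced 2-group of minimal order with $|P|\ge 32$. By \autoref{lem:balanced-2-groups-of-small-order}, no non-Dedekind balanced 2-group has order exactly 32, so $|P|\ge 64$.

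First, I would combine \autoref{D8-subquotient} with minimality to pin down the quotient $\tilde P := P/K$, where $K = H\cap gHg^{-1}$ is the normal subgroup produced by taking a non-normal $H\le P$ of minimal order and any $g\notin N_P(H)$. As in the proof of \autoref{D8-subquotient}, $\tilde P$ is balanced, non-Dedekind, and contains a subgroup isomorphic to $D_8$. By minimality, $|\tilde P|<32$, so by \autoref{lem:balanced-2-groups-of-small-order} we have $\tilde P\in\{D_8,Q_{16},C_4\ltimes C_4\}$. Of these, $Q_{16}$ has a unique involution and all involutions of $C_4\ltimes C_4$ are central, so neither contains $D_8$ as a subgroup. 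This forces $\tilde P = D_8$ and $|K|\ge 8$.

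Second, I would exploit rigidity of central quotients. For every central involution $z\in Z(P)$, $P/\langle z\rangle$ is balanced of order $\ge 32$, and by minimality it must be Dedekind, hence abelian or $Q_8\times (C_2)^k$. Since a Dedekind 2-group has derived subgroup of order at most 2 and $[P/\langle z\rangle, P/\langle z\rangle] = [P,P]\langle z\rangle/\langle z\rangle$, applying this with any involution $z$ in the nontrivial normal subgroup $[P,P]\cap Z(P)$ yields $|[P,P]|\le 4$. Together with the fact that $[P,P]$ surjects onto $[D_8,D_8]\cong C_2$ in $\tilde P$, this leaves two cases: $|[P,P]|=2$ (in which case $[P,K]\le [P,P]\cap K = 1$, so $K\subseteq Z(P)$) or $|[P,P]|=4$ with $|[P,P]\cap K|=2$.

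Finally, I would perform the case analysis on lifts $g,h\in P$ of the generators of $\tilde P = D_8$. Using \autoref{lem:squares in H with probability 1/2} on cosets in $hK$, one may choose an involution $h_0\in hK$, and applying \autoref{prop:balanced-equivalent-coset-condition} to test sets $A = \langle h_0\rangle\cup g'\langle h_0\rangle$ as $g'$ varies over lifts of $\tilde g$ forces $g$ to invert $K$ (or a large subgroup thereof) and pins down $g^2$ and $h^2$ modulo $K$. This reduces $P$ to a short list of explicit extensions of $D_8$ by $K$, each of which either contains a subgroup isomorphic to $D_{2n}$ with $n\ge 6$ (imbalanced by \autoref{lem:dihedral-imbalanced}) or admits an explicit imbalanced test subset, yielding the contradiction. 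The main obstacle is this last step: cleanly organizing the case analysis over the possible $K$'s and extensions is delicate, and the cleanest route is likely to carry out a finite SageMath verification over the remaining candidate structures in the spirit of \autoref{lem:balanced-2-groups-of-small-order}.
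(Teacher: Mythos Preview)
Your plan has a genuine gap in the first step. You set $K = H \cap gHg^{-1}$ for a minimal non-normal subgroup $H$ and then assert ``by minimality, $|\tilde P| < 32$'', which implicitly assumes $K$ is nontrivial. But if the minimal non-normal subgroup $H$ already has order $2$ (which is perfectly possible --- it happens in $D_8$ itself), then $K = 1$ and $\tilde P = P$, so minimality gives you nothing. You would have to treat this case separately before the rest of the argument can proceed, and even then your final step is, by your own admission, an open-ended extension problem that may devolve into further computer search.

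The paper bypasses all of this with one clean idea you are missing: rather than trying to control $P$ globally via a minimal counterexample, simply interpolate an order-$32$ subquotient between $P$ and the $D_8$ subquotient furnished by \autoref{D8-subquotient}. In a $2$-group one can refine any subquotient chain by index-$2$ steps (using that normal subgroups of $p$-groups meet the center nontrivially, and that proper subgroups are strictly contained in their normalizers), so there is some subquotient $\tilde P$ of $P$ with $|\tilde P| = 32$ that still has $D_8$ as a further subquotient. Now $\tilde P$ is balanced and non-abelian, so \autoref{lem:balanced-2-groups-of-small-order} forces $\tilde P \cong Q_8 \times (C_2)^2$. But this group is Dedekind, hence every subquotient of it is Dedekind, contradicting the existence of the $D_8$ subquotient. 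This finishes the proof in a paragraph, with no minimal-counterexample bookkeeping, no commutator computations, and no extension-by-extension case analysis.
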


\begin{proof}
    Assume for the sake of contradiction that $P$ is a non-Dedekind balanced $2$-group of order $\ge 32$. By \autoref{D8-subquotient}, $P$ has a subquotient equal to $D_8$. On the other hand, since $P$ is a $2$-group, for any (normal) subgroup $H$ of $P$, there exists a chain of (normal) subgroups $H < H_1 < \cdots < H_k < P$, where each consecutive inclusion is of index $2$. Consequently, there exists a subquotient $\tilde{P}$ of $P$ of order $32$ with $D_8$ as a further subquotient. Since $\tilde{P}$ must be non-abelian, by \autoref{lem:balanced-2-groups-of-small-order}, $\tilde{P} = Q_8 \times (C_2)^2$. However, this is impossible since $\tilde{P}$ is Dedekind, so its subquotients must also be Dedekind, whereas $D_8$ is not Dedekind.
\end{proof}

\begin{corollary}
    The only balanced non-abelian $2$-groups are $D_8$, $Q_{16}$, $C_4 \ltimes C_4$, and $Q_8 \times (C_2)^n$, where $n \ge 0$.
\end{corollary}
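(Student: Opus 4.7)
The plan is to split into two cases depending on whether the balanced non-abelian $2$-group $P$ is Dedekind, and then chain together results that are already in hand.

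First, in the non-Dedekind case, the preceding proposition forces $|P| \le 16$, so $P$ sits inside the finite list of non-abelian balanced $2$-groups of order at most $32$ provided by \autoref{lem:balanced-2-groups-of-small-order}. Going through that list, I would just identify which entries fail to be Dedekind: at order $8$, $Q_8$ is Dedekind while $D_8$ (with non-normal $\langle s\rangle$) is not; at order $16$, $Q_8\times C_2$ is Dedekind while $Q_{16}$ and $C_4\ltimes C_4$ are not; and $Q_8\times (C_2)^2$ is Dedekind. This peels off exactly $D_8$, $Q_{16}$, and $C_4\ltimes C_4$ as the non-Dedekind balanced $2$-groups.

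Second, in the Dedekind case, since $P$ is non-abelian it is Hamiltonian by definition. Baer's structure theorem (\autoref{thm:baer-structure}), specialized to $2$-groups, says the only Hamiltonian $2$-groups are $Q_8\times (C_2)^n$ for $n\ge 0$, and these are all balanced by \autoref{prop:Hamil-2-nian}. Unioning the two cases yields precisely the claimed list.

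The corollary itself is essentially bookkeeping; the real content lives upstream. The main obstacle was already overcome in the preceding proposition, whose key move combines \autoref{D8-subquotient} (every non-Dedekind balanced $2$-group has $D_8$ as a subquotient) with the fact that $Q_8\times (C_2)^2$ (the only non-abelian balanced $2$-group of order $32$) is Dedekind, and therefore cannot admit $D_8$ as a subquotient. Once that bound is in place, this corollary is just a clean assembly of \autoref{lem:balanced-2-groups-of-small-order}, \autoref{thm:baer-structure}, and \autoref{prop:Hamil-2-nian}.
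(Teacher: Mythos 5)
Your proposal is correct and matches the paper's intended argument: the corollary is assembled exactly by combining the preceding proposition (non-Dedekind balanced $2$-groups have order $\le 16$), the computational list in \autoref{lem:balanced-2-groups-of-small-order}, Baer's theorem for the Hamiltonian case, and \autoref{prop:Hamil-2-nian}. The Dedekind/non-Dedekind case split and the identification of which groups in the order $\le 32$ list are Dedekind are both handled correctly.
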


This completes the proof of the classification of balanced groups in \autoref{thm:classification-of-balanced-groups}.

\section{Further Questions}\label{sec:Further-Questions}

It is vacuously true that all Dedekind groups are weakly balanced. \autoref{cor:three cases of weakly balanced groups} shows that any non-abelian weakly balanced group $G$ must fall under one of three cases. We have also shown in \autoref{lem:Direct products are weakly balanced iff Hamiltonian} that weakly balanced groups of type $(1)$ must be Hamiltonian, so their classification is given by \autoref{thm:baer-structure}. This leaves the weakly balanced groups of types $(2)$ and $(3)$. In both these cases, there exist groups which are neither Dedekind nor balanced.

\begin{enumerate}
    \item Type $(2)$: $D_{2n} = C_2 \ltimes C_n$ is weakly balanced for all $n$. More generally, the sign semidirect product $C_2 \ltimes Q$ is weakly balanced for all odd abelian $Q$. The sign semidirect products $C_8 \ltimes C_3$ and $Q_8 \ltimes C_3$ are also weakly balanced.
    \item Type $(3)$: The three non-trivial semidirect products $C_4 \ltimes C_8$ are all weakly balanced. The group $QD_{32}$ is also weakly balanced.
\end{enumerate}

\begin{question}
    Classify all weakly balanced groups.
\end{question}

One can also generalize the question to infinite groups. Define an \textit{infinite} group $G$ to be \textbf{balanced} if $|AA^{-1}| = |A^{-1}A|$ for all \textit{finite} subsets $A \subset G$. For example, the infinite group $Q_8 \times (C_2)^{\infty}$ is balanced by the same argument as in \autoref{prop:Hamil-2-nian}.

\begin{question}
    Classify all balanced infinite groups.
\end{question}

\appendix
\section{Proof of \autoref{lem:balanced-2-groups-of-small-order}}
\label{appendix:groups of order 8 16 and 32 computation}
We prove \autoref{lem:balanced-2-groups-of-small-order} by exhaustion using SageMath Version 10.7 with the GAP interface \cite{Sage, GAP4}. All code is available 
at 
\begin{center}
    \url{https://github.com/pr4-kp/Balanced-Groups-Computations}
\end{center}

For non-abelian groups \(G\) of order \(8\) and \(16\), we can run a brute 
force computation on all subsets of \(G\) to see if they are balanced or 
imbalanced, which gives us parts (1) and (2). Note that we can make 
this computation slightly faster by noting that if 
\(|A| > |G|/2\), then \(|AA\inv| = |A\inv A|\) (see \cite{tao12}). 

For non-abelian groups \(G\) of order \(32\), we first eliminate all 
groups that have a subgroup isomorphic to an imbalanced group, since
they are then imbalanced by \autoref{lem:subquotients}. This leaves us 
with the GAP groups of order \(32\) and ID in: 
\[\ps{2, 4, 5, 10,12, 13, 14, 17, 20, 23, 26, 32, 35, 41, 47}\]
Importantly, 
the ID 47 corresponds to \(Q_8 \times (C_2)^2\), which we know is 
balanced by \autoref{prop:Hamil-2-nian}. By sampling random sets from the other groups, we find 
that none of them are balanced except for \(Q_8 \times (C_2)^2\).
This completes part (3). 

\printbibliography

\end{document}